\newtheorem{thm}{Theorem}
\newtheorem{lem}{Lemma}
\theoremstyle{definition}
\newtheorem{defin}{Definition}
\newtheorem{rem}{Remark}
\DeclareMathOperator{\dist}{dist} 
\DeclareMathOperator{\LipPerSh}{LipPerSh}\DeclareMathOperator{\LipSh}{LipSh}
\DeclareMathOperator{\diag}{diag}
\DeclareMathOperator{\Cl}{Cl}\DeclareMathOperator{\Per}{Per}
\DeclareMathOperator{\Id}{Id}
\newcommand{\ep}{\varepsilon}
\newcommand{\al}{\alpha}
\newcommand{\lam}{\lambda}
\newcommand{\ZZ}{\mathds{Z}}
\newcommand{\NN}{\mathds{N}}
\newcommand{\RR}{\mathds{R}}
\newcommand{\sref}[1]{(\ref{#1})}
\newcommand{\LL}{\mbox{$\cal{L}$}}
\newcommand{\be}{\beta}
\newcommand{\D}{D}
\newcommand{\CR}{{\cal C}{\cal R}}
\title{Lipschitz Shadowing for Flows}
\author{Kenneth J. Palmer\thanks{Corresponding author, partially supported by NSC (Taiwan)  97-2115-M-002 -011 -MY2}  \\
Department of Mathematics,\\
National Taiwan University, Taipei 106 - Taiwan
\and Sergei Yu. Pilyugin\thanks{partially supported by NSC (Taiwan)  97-2115-M-002 -011 -MY2}  \\
Faculty of Mathematics and Mechanics,\\
St. Petersburg State University,\\
University av. 28, 198504, St. Petersburg, Russia\\
\and Sergey B. Tikhomirov\thanks{partially supported by NSC (Taiwan)  97-2115-M-002 -011 -MY2 and CNPq (Brazil)}  \\
Departamento de Matematica,\\ Pontificia Universidade Catolica do Rio de Janeiro\\
Rua Marques de Sao Vicente, 225, Rio de Janeiro, Brazil.}
\date{\today}
\begin{document}

\maketitle

\begin{abstract}
Let $\phi$ be the flow generated by a smooth vector field $X$ on a
smooth closed manifold. We show that the Lipschitz shadowing
property of $\phi$ is equivalent to the structural stability of $X$
and that the Lipschitz periodic shadowing property of $\phi$ is
equivalent to the $\Omega$-stability of $X$.
\end{abstract}

{\bf keyword:}  vector fields; Lipschitz shadowing; periodic shadowing;  structural stability\\
{\bf AMS(MOS) subject classifications.}   37C50, 37D20\\


\section{Introduction}

The theory of shadowing of approximate trajectories
(pseudotrajectories) of dynamical systems is now a well developed
part of the global theory of dynamical systems (see, for example,
the monographs \cite{Palmer1}, \cite{Pilyugin}). This theory is
closely related to the classical theory of structural stability (the
basic definitions of structural stability and $\Omega$-stability for
flows can be found, for example, in the monograph \cite{PilSSBook}).
It is well known that a diffeomorphism has the shadowing property in
a neighborhood of a hyperbolic set \cite{Anosov}, \cite{Bowen} and a
structurally stable diffeomorphism has the shadowing property on the
whole manifold \cite{Robinson}, \cite{Morimoto}, \cite{Sawada}.
Analyzing the proofs of the first shadowing results by Anosov
\cite{Anosov} and Bowen \cite{Bowen}, it is easy to see that, in a
neighborhood of a hyperbolic set, the shadowing property is
Lipschitz (and the same holds in the case of a structurally stable
diffeomorphism, see \cite{Pilyugin}). At the same time, it is easy
to give an example of a diffeomorphism that is not structurally
stable but has the shadowing property (see \cite{Pilyugin2}, for
example). Thus, structural stability is not equivalent to shadowing.
However it was shown in \cite{Pilyugin1} that structural stability
of a diffeomorphism is equivalent to Lipschitz shadowing.

Turning to flows, it is well known that a flow has the shadowing
property in a neighborhood of a hyperbolic set \cite{Palmer1},
\cite{Pilyugin} and a structurally stable flow has the shadowing
property on the whole manifold \cite{Pilyugin}, \cite{Pilyugin4}. In
fact, in a neighborhood of a hyperbolic set, the shadowing property
is Lipschitz and the same holds in the case of a structurally stable
flow, see \cite{Pilyugin}. At the same time, it is easy to give an
example of a flow that is not structurally stable but has the
shadowing property (to construct such an example, one can use almost
the same idea as in \cite{Pilyugin2}). Thus, as with
diffeomorphisms, structural stability is not equivalent to
shadowing. However it is our purpose in this article to show that
structural stability of a flow is equivalent to Lipschitz shadowing.
Let us note that the proof for the flow case is a nontrivial
modification of the proof for the diffeomorphism case.

One of the previously used approaches to compare shadowing property
and structural stability is passing to $C^1-$interiors. Sakai
\cite{Sak} showed that the $C^1-$interior of the set of
diffeomorphisms with the shadowing property coincides with the set
of structurally stable diffeomorphisms. See also \cite{PilRodSak}
for the generalization of this result to other types of shadowing
properties. For vector fields the situation is different. There is
an example of a vector field with the robust shadowing property
which is not structurally stable \cite{PilTikh}. See also
\cite{LeeSak,PilTikh2008,Tikh} for some positive results in this
direction.

In this paper, we also study vector fields having the Lipschitz
periodic shadowing property. Diffeomorphisms having the Lipschitz
periodic shadowing property were studied in \cite{OPT}, where it was
shown that this property is equivalent to $\Omega$-stability. We
prove a similar statement for vector fields.

\section{Preliminaries}

Let $M$ be a smooth closed manifold with Riemannian metric
dist$(\cdot,\cdot)$ and let $X$ be a vector field on $M$ of class
$C^1$. Let $\phi(t,x)$ be the flow on $M$ generated by $X$.
\medskip

\begin{defin}   A (not necessarily continuous) function
$y:I\to M$ (where $I$ is an interval in $\RR$) is called a
$d$-pseudotrajectory if
$$
\dist(y(\tau+t),\phi(t,y(\tau))\leq d,\quad 0\leq t \leq 1, \quad
\tau, \tau+t\in I.
$$
Mostly we work with pseudotrajectories defined on $I=\RR$.
\end{defin}

\begin{defin}  We say that the vector field $X$ has the {\it Lipschitz shadowing property}
($X \in \LipSh$) if there exist $d_0$ and ${\cal L}>0$ such that if
$y:\RR\mapsto M$ is a $d$-{\it pseudotrajectory} for $d\le d_0$,
then $y(t)$ is ${\cal L}d$-{\it shadowed by a trajectory}, that is,
there exists a trajectory $x(t)$ of $X$ and an increasing
homeomorphism (\textit{reparametrization}) $\alpha(t)$ of the real
line satisfying
\begin{equation}
\label{eqdef2.1} \al(0) = 0, \quad
\left|\frac{\alpha(t_2)-\alpha(t_1)}{t_2-t_1}-1\right|\le {\cal L}d
\end{equation}
for $t_2\neq t_1$ and
\begin{equation}\label{eqdef2.2}
\dist(y(t),x(\alpha(t))\le {\cal L}d
\end{equation}
for all $t$.
\end{defin}

\begin{defin} We say that the vector field $X$ has the
\textit{Lipschitz periodic shadowing property} ($X \in \LipPerSh$)
if there exist $d_0$ and ${\cal L}  > 0$ such that if $y:\RR\mapsto
M$ is a periodic $d$-{\it pseudotrajectory} for $d\le d_0$, then
$y(t)$ is ${\cal L}d-${\it shadowed by a periodic trajectory}, that
is, there exists a trajectory $x(t)$ of $X$ and an increasing
homeomorphism $\alpha(t)$ of the real line satisfying inequalities
\sref{eqdef2.1} and \sref{eqdef2.2} and such that
\begin{equation}
\notag x(t+\omega)=x(t)
\end{equation}
for some $\omega > 0$.

The last equality implies that $x(t)$ is either a closed trajectory
or a rest point of the flow $\phi$.
\end{defin}

The main results of the paper are the following theorems.

\begin{thm}
\label{thmLipSh} A vector field $X$ satisfies the Lipschitz
shadowing property if and only if $X$ is structurally stable.
\end{thm}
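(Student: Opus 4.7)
The implication ``$X$ structurally stable $\Rightarrow X\in\LipSh$'' is essentially contained in \cite{Pilyugin,Pilyugin4}: the shadowing lemma for structurally stable flows proved there already delivers a reparametrization satisfying an inequality of the type \sref{eqdef2.1} and a shadowing estimate of the form \sref{eqdef2.2}, so it remains only to observe that the constants obtained in those proofs are uniform in $d$. I would therefore devote the bulk of the work to the converse.

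The plan for ``$X\in\LipSh\Rightarrow X$ structurally stable'' is to reduce Lipschitz shadowing to a linearized surjectivity property along every orbit of $X$, in the spirit of Pilyugin's proof for diffeomorphisms \cite{Pilyugin1}, and then to invoke the Robinson--Hayashi characterization of structural stability for flows (Axiom A plus strong transversality). Fix a trajectory $x(t)=\phi(t,p)$ and a bounded continuous section $f(t)$ of $T_{x(t)}M$. For each small $d>0$, I would build a $d$-pseudotrajectory of the form $y_d(t)=\expp_{x(t)}(d\,v_d(t))$, where $v_d$ is chosen so that, up to an $O(d)$ error, it solves the variational equation with forcing,
\[
\dot v = DX(x(t))\,v + f(t).
\]
Applying the Lipschitz shadowing property produces an exact trajectory $\tilde x_d$ and a reparametrization $\al_d$ with $\dist(y_d(t),\tilde x_d(\al_d(t)))\le\LL d$; rescaling by $1/d$ and passing to a subsequential limit as $d\to 0$ yields a bounded section $u$ on $\RR$ satisfying
\[
\dot u = DX(x(t))\,u + f(t) + \be(t)\,X(x(t)),
\]
where the term $\be(t)X(x(t))$ absorbs the limit of $\dot\al_d-1$. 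Passing to the quotient bundle $TM/\langle X\rangle$ over the orbit of $p$ kills the parallel component and shows that the operator $u\mapsto \dot u - \overline{DX(x(t))}\,u$ is surjective on bounded sections of that quotient bundle.

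Applied to every orbit of $X$, this surjectivity is the flow version of Ma\~n\'e's analytic condition; by Sacker--Sell spectral theory (see e.g.\ \cite{PilSSBook}) it is equivalent to the existence of an exponential dichotomy of the normal linearized flow over the non-wandering set $\Omega(X)$, i.e.\ Axiom A. Running the same argument on orbits connecting the $\al$-limit set of a point $q\notin\Omega(X)$ to its $\omega$-limit set yields transversality of the corresponding stable and unstable manifolds, i.e.\ the strong transversality condition. Robinson's theorem then gives structural stability of $X$.

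\textbf{Main obstacle.} The chief difficulty, absent in the diffeomorphism case \cite{Pilyugin1}, is the zero Lyapunov exponent along the flow direction and the indispensable role of the reparametrization $\al$ in the definition of shadowing: without $\al$, Lipschitz shadowing would already fail near any two close periodic orbits of different period. Because of this, the limit of rescaled shadowing orbits only satisfies an inhomogeneous equation on the \emph{normal} bundle, with the parallel component hidden inside $\be(t)X(x(t))$. Controlling the term $\dot\al_d-1$ uniformly in $d$, extracting a genuine bounded solution on the quotient bundle in the limit, and handling the behavior of the normal linearization near rest points of $X$ are the three places where the flow proof will require essential new input beyond \cite{Pilyugin1}.
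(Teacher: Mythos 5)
Your overall strategy is recognizably the same as the paper's: reduce Lipschitz shadowing to a surjectivity statement for the linearized (normal) difference/differential equation along every orbit, with the flow direction absorbed into a free scalar term; pass to the normal bundle to kill the zero exponent; deduce exponential dichotomies from bounded solvability; bootstrap to hyperbolicity via Sacker--Sell; and close with Robinson's theorem. One genuine methodological difference is that you work directly with the continuous-time variational equation, whereas the paper discretizes: it first proves that Lipschitz shadowing for the flow implies a discrete Lipschitz shadowing property for the time-one map $f=\phi(1,\cdot)$ (Lemma~\ref{lemPalm1}), and then runs all the linear analysis for sequences $A_k=Df(p_k)$, so that the Maizel'/Pliss lemmas (Lemmas~\ref{lemPalm3}--\ref{lemPalm4}) can be applied verbatim. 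Your continuous version would need the corresponding continuous-time Maizel'/Pliss statements; these exist, so this is a choice of formalism rather than a flaw, but the discretization is what makes the paper's Lemma~\ref{lemPalm2} (the limit extraction as $d\to0$) technically clean.

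There are two substantive gaps. First, you never address the point that rest points must be \emph{isolated} in the chain-recurrent set. Simply having a dichotomy on the normal bundle along each orbit and a dichotomy at each rest point is not enough to define a hyperbolic structure on $\Omega(X)$: if a rest point $x_0$ were a limit of nontrivial chain-recurrent orbits, the splitting degenerates. The paper devotes Section~\ref{secPalm4} to this, constructing a homoclinic orbit through a would-be accumulating chain-recurrent point, observing that $\beta_k X(p_k)$ already solves the homogeneous equation with scalar forcing, so that Lipschitz shadowing forces bounded solvability of the \emph{unforced-in-flow-direction} system $v_{k+1}=A_kv_k+b_{k+1}$, and then getting a dimension contradiction because $X(p_0)\ne 0$ lies in $B^+(A)\cap B^-(A)$ while $\dim B^+ + \dim B^- = n$. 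Nothing in your sketch produces this. Second, the strong transversality step is much more delicate than ``yields transversality.'' The paper must prove the inclusions ${\cal E}^s\subset E^s$ and ${\cal E}^u\subset E^u$ relating the bounded-solution subspaces of the normal linearization to the actual tangent spaces of $W^s$, $W^u$; this is done by a case split (rest-point limit vs.\ $\Sigma$-limit) and, in the second case, by an implicit-function-theorem argument to parametrize the local stable manifold through the cross-section and differentiate. Your proposal asserts transversality from the same surjectivity without supplying this bridge, and that is exactly where the flow-direction slack term $\beta(t)X(x(t))$ has to be disposed of carefully rather than merely ``absorbed.''
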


\begin{thm}
\label{thmLipPerSh} A vector field $X$ satisfies the Lipschitz
periodic shadowing property if and only if $X$ is $\Omega$-stable.
\end{thm}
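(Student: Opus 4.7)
The plan is to prove the two implications separately, using the spectral decomposition for the easy direction and a reduction to hyperbolicity of periodic trajectories for the hard direction, following the strategy used for diffeomorphisms in \cite{OPT}.

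For the sufficiency direction ($\Omega$-stability $\Rightarrow$ LipPerSh), I would argue as follows. Assume $X$ is $\Omega$-stable. By Smale's characterization, $X$ satisfies Axiom A together with the no-cycle condition, so the non-wandering set $\Omega(X)$ decomposes into finitely many disjoint hyperbolic basic sets $\Omega_1,\dots,\Omega_k$. Standard results (see \cite{Pilyugin}) give that in some neighborhood $U$ of $\Omega(X)$ the flow enjoys the Lipschitz shadowing property with uniform constants $d_0, \mathcal{L}$. If $y$ is a periodic $d$-pseudotrajectory with $d\le d_0$ small, a compactness argument forces $y$ to stay inside $U$, since otherwise a segment of $y$ would remain outside a fixed neighborhood of $\Omega(X)$ for times longer than any prescribed bound, contradicting $\Omega$-stability of the chain recurrent structure. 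Lipschitz shadowing inside $U$ then yields $x(\cdot)$ and $\alpha(\cdot)$ satisfying \sref{eqdef2.1}--\sref{eqdef2.2}. Finally, since the shadowing orbit lies in a neighborhood of a single hyperbolic basic set and $y$ is periodic, uniqueness of shadowing in hyperbolic sets (up to the natural equivalence of reparametrizations) forces $x(\cdot)$ itself to be periodic.

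For the necessity direction (LipPerSh $\Rightarrow$ $\Omega$-stability), the argument is in two steps. \textbf{Step A:} show that every rest point and every closed orbit of $X$ is hyperbolic. The idea is the contrapositive: if $p$ is a non-hyperbolic rest point or $\gamma$ is a non-hyperbolic closed orbit, we manufacture a sequence of periodic $d_n$-pseudotrajectories $y_n$ with $d_n\to 0$ for which any shadowing trajectory-reparametrization pair would have to violate \sref{eqdef2.1} or \sref{eqdef2.2} with the uniform constant $\mathcal{L}$. In the closed orbit case, one reduces to the associated Poincar\'e return diffeomorphism on a local cross-section transverse to $\gamma$: a non-hyperbolic eigenvalue of the linearized Poincar\'e map yields periodic $d$-pseudo-orbits of that diffeomorphism that cannot be Lipschitz shadowed, and suspending these produces the required periodic pseudotrajectories for the flow. \textbf{Step B:} upgrade hyperbolicity of individual periodic trajectories to Axiom A plus no-cycles. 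For this I would verify that the class LipPerSh is $C^1$-open (a $C^1$-small perturbation $Y$ of $X$ inherits the periodic shadowing property, since pseudotrajectories of $Y$ are pseudotrajectories of $X$ with a comparable error). Combining openness with Step A, every vector field in a $C^1$-neighborhood of $X$ has only hyperbolic rest points and closed orbits. By the vector-field version of the $C^1$-stability theorem (Hayashi and subsequent work), this $C^1$-robust hyperbolicity of the periodic set is equivalent to $\Omega$-stability.

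The main obstacle I expect is Step A for a non-hyperbolic closed orbit. Unlike in the diffeomorphism case treated in \cite{OPT}, the definition of shadowing for flows allows the reparametrization $\alpha(t)$, so a naive construction of pseudotrajectories can be absorbed into the $\mathcal{L}d$ slack in $\alpha$ rather than producing a bona fide contradiction. One must therefore work carefully in a flow-box or on a transverse cross-section, decomposing errors into a flow-direction component (which the reparametrization can accommodate) and a transverse component (which it cannot), and arrange the pseudotrajectory so that the transverse component alone already forces the required violation of \sref{eqdef2.2}. Getting uniform quantitative control of this decomposition along a full period of $\gamma$, and checking that the suspended pseudotrajectories are genuinely periodic with the same period as the base construction, is the delicate technical point that distinguishes the flow case from the diffeomorphism case.
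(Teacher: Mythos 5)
Your sufficiency direction follows the paper's outline (spectral decomposition, local Lipschitz shadowing plus expansivity near basic sets), but it skips the one nontrivial point: why does a small \emph{periodic} pseudotrajectory stay near a \emph{single} basic set, rather than merely near $\Omega(X)$? This is exactly where the no-cycle hypothesis enters; the paper proves it via Lemma~\ref{lemOm4} (once a pseudotrajectory leaves $U_j$ it cannot re-enter $V_j\subset U_j$, contradicting periodicity). Also, the paper does not deduce periodicity of the shadowing orbit from ``uniqueness of shadowing'' but from expansivity: the $\mu$-shift of $p$ is $a$-close (up to reparametrization) to $p$ itself, hence lies on the orbit of $p$.

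The necessity direction is where your proposal genuinely fails. Step~B asserts that $\LipPerSh$ is $C^1$-open because ``pseudotrajectories of $Y$ are pseudotrajectories of $X$ with comparable error.'' This is not a proof: a $d$-pseudotrajectory of $Y$ is only a $(d+c\,\|X-Y\|_{C^0})$-pseudotrajectory of $X$, and applying Lipschitz periodic shadowing for $X$ returns a periodic orbit of $X$, not of $Y$; moreover, as $d\to 0$ the perturbation error stays fixed, so any would-be Lipschitz constant for $Y$ blows up. The paper itself records (citing \cite{PilTikh}) that the Sakai-type $C^1$-interior route which works for diffeomorphisms does \emph{not} carry over to flows, so one cannot shortcut to Hayashi's $\Omega$-stability theorem this way. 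Without Step~B, your Step~A --- hyperbolicity of individual rest points and closed orbits --- is far from Axiom~A$'$ plus no-cycles: nothing makes the hyperbolic constants uniform over all periodic orbits, nothing ties the closure $\Cl\Per(X)$ to $\CR(X)$, and nothing rules out cycles.

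The paper's actual strategy exploits the \emph{Lipschitz} hypothesis to extract \emph{uniform} hyperbolicity constants on all closed orbits (Lemmas~\ref{LP2}, \ref{LP8}, Maizel$'$--Pliss--Coppel), then shows $\Cl\Per(X)=\CR(X)$ (Lemma~\ref{LP7}, a soft consequence of periodic shadowing), extends the splitting to the closure via a limit/Grassmannian argument and Sacker--Sell, and derives the no-cycle condition directly by producing periodic pseudotrajectories through a wandering point in a cycle. The technical novelty you do not address at all --- and which distinguishes Lemma~\ref{LP2} from its non-periodic analogue Lemma~\ref{lemPalm2} --- is how to make the constructed pseudotrajectory genuinely \emph{periodic}: one must steer the accumulated discrepancy $\Delta_k$ back to its initial value after a whole number of periods, and in the presence of center directions this requires the null-controllability Lemma~\ref{LP2.5}. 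You correctly flag the flow-direction/transverse-error split as the place where the flow case is harder than the diffeomorphism case, but the ``close up the loop'' problem is a second, independent difficulty that your plan leaves untreated.
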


It is known that expansive diffeomorphisms having the Lipschitz
shadowing property are Anosov (see \cite{Pilyugin1}).

We show, as a consequence of Theorem \ref{thmLipSh}, that expansive
vector fields having the Lipschitz shadowing property are Anosov.
Let us recall the definition of expansivity for vector fields.

\begin{defin} We say that a vector field $X$
and the corresponding flow $\phi(t,x)$ are \textit{expansive} if
there exist constants $a, \delta > 0$ such that if
$$
\mbox{dist}(\phi(t, x), \phi(\al(t), y)) < a, \quad t \in\RR,
$$
for points $x, y \in M$ and an increasing homeomorphism $\al$ of the
real line, then $y = \phi(\tau, x)$ for some $|\tau| < \delta$.
\end{defin}

\begin{thm}
An expansive vector field $X$ having the Lipschitz shadowing
property is Anosov.
\end{thm}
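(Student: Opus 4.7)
The plan is to bootstrap from Theorem~\ref{thmLipSh}. Since $X$ has the Lipschitz shadowing property, Theorem~\ref{thmLipSh} gives that $X$ is structurally stable. By the stability theorem for flows (Robinson's sufficient direction, together with Hayashi's resolution of the $C^1$ stability conjecture for flows in the necessary direction), structural stability is equivalent to Axiom~A plus the strong transversality condition. Hence $\Omega(X)$ is hyperbolic and admits a spectral decomposition $\Omega(X)=\Lambda_1\cup\cdots\cup\Lambda_k$ into basic sets, and for every $p\in M$ there exist indices $i,j$ with $p\in W^u(\Lambda_i)\cap W^s(\Lambda_j)$, the intersection being transverse. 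Thus the conclusion ``$X$ is Anosov'' is reduced to showing $\Omega(X)=M$, since then the hyperbolic splitting on $\Omega(X)$ extends to all of $M$.

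I would carry out the reduction by contradiction. Suppose there exists $p\in M\setminus\Omega(X)$, and let $\Lambda_i,\Lambda_j$ be the basic sets with $p\in W^u(\Lambda_i)\cap W^s(\Lambda_j)$. If $\Lambda_i=\Lambda_j$, then $p$ lies in a transverse homoclinic class of the basic set and is therefore accumulated by periodic orbits by the Birkhoff--Smale homoclinic theorem for flows; this places $p$ in $\overline{\Per(X)}\subset\Omega(X)$, contrary to the choice of $p$. Hence $\Lambda_i\neq\Lambda_j$, and $p$ is a genuine heteroclinic point connecting two distinct basic sets.

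The remaining step is to use expansivity to rule out such a heteroclinic $p$. The natural strategy is to produce a second orbit, distinct from that of $p$, whose trajectory stays within the expansivity constant $a$ (for some reparametrization) of $\phi(\cdot,p)$ for all $t\in\RR$; expansivity then forces that orbit to coincide with the orbit of $p$, which is the contradiction. One way to produce such an orbit is to slide $p$ within the transverse intersection $W^u(\Lambda_i)\cap W^s(\Lambda_j)$, which is a submanifold of positive dimension near $p$: a nearby point $p'$ in this intersection is forward-asymptotic to $\Lambda_j$ along a strong-stable leaf and backward-asymptotic to $\Lambda_i$ along a strong-unstable leaf. An alternative is to construct, from the heteroclinic $p$, a $d$-pseudotrajectory that agrees with $\phi(\cdot,p)$ outside a finite window and performs a small jump inside; the Lipschitz shadowing property yields a true orbit that is $\mathcal{L}d$-close to $\phi(\cdot,p)$ for every $t$, and shrinking $d$ so that $\mathcal{L}d<a$ and checking that this shadow does not lie on the orbit of $p$ gives the desired violation of expansivity.

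The main obstacle is this last construction: one needs the second orbit to remain close to $\phi(\cdot,p)$ \emph{both} as $t\to+\infty$ and as $t\to-\infty$, whereas the strong stable/unstable convergence only controls one time direction at a time. Reconciling the two directions with a single reparametrization is where structural stability is crucial, since it supplies the uniform hyperbolic estimates on the basic sets $\Lambda_i,\Lambda_j$ and the geometric compatibility of the local stable and unstable foliations provided by strong transversality. Once the witness orbit is produced, expansivity and the hypothesis $\Lambda_i\neq\Lambda_j$ close the argument.
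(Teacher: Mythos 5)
The paper's proof is very short and leans on an external theorem: it deduces structural stability from Theorem~\ref{thmLipSh}, observes that a structurally stable expansive vector field is \emph{robustly} expansive (nearby fields are topologically equivalent to $X$, hence also expansive), and then invokes Theorem~B of Moriyasu--Sakai--Sun \cite{MorSakSun}, which says that a robustly expansive vector field with the shadowing property is Anosov. Your proposal instead tries to establish $\Omega(X)=M$ directly, by showing that a wandering heteroclinic point would violate expansivity. That is a genuinely different route, and the preliminary reductions are fine: $\Omega(X)=M$ together with Axiom~A$'$ does give Anosov, and the homoclinic case $\Lambda_i=\Lambda_j$ is indeed impossible (most cheaply because a structurally stable flow has no cycles).

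The genuine gap is exactly the step you label ``the main obstacle,'' and neither of your two ideas actually closes it. Sliding $p$ inside $W^u(\Lambda_i)\cap W^s(\Lambda_j)$ fails in general because that transverse intersection can have dimension exactly one near $p$ (when $\dim W^u(\Lambda_i)+\dim W^s(\Lambda_j)=n+1$), in which case it locally \emph{is} the orbit of $p$ and there is nothing to slide to. Even when the intersection is higher-dimensional, a nearby $p'$ in it is forward-asymptotic to $\Lambda_j$ and backward-asymptotic to $\Lambda_i$, but those convergences can be to quite different orbits inside the basic sets, so you have not shown that $\phi(\cdot,p')$ stays uniformly $a$-close to $\phi(\cdot,p)$ for all time under one reparametrization. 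The pseudotrajectory-with-a-jump idea is also inconclusive: Lipschitz shadowing returns \emph{some} trajectory $\mathcal{L}d$-close to $\phi(\cdot,p)$, but nothing prevents that shadow from being a reparametrized time-translate of the orbit of $p$ itself, in which case there is no contradiction with expansivity. So the outline is plausible but the decisive construction is missing, and it is not routine; producing it amounts to reproving a substantial part of the content that the paper simply imports from \cite{MorSakSun}.
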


\begin{proof} By Theorem \ref{thmLipSh}, a vector field $X$ having the Lipschitz
shadowing property is structurally stable. Hence, there exists a
neighborhood ${\cal N}$ of $X$ in the $C^1$-topology such that any
vector field in ${\cal N}$ is expansive (this property of $X$ is
sometimes called robust expansivity).

By Theorem B of \cite{MorSakSun}, robustly expansive vector fields
having the shadowing property are Anosov.
\end{proof}

In Sec.~\ref{secLipSh} we prove Theorem~\ref{thmLipSh} and in
Sec.~\ref{secLipPerSh} we prove Theorem~\ref{thmLipPerSh}. Both
proofs are long so that each section is divided into several
subsections.

\section{The Lipschitz shadowing property}\label{secLipSh}

As was mentioned above in \cite{Pilyugin4} it was proved that
structurally stable vector fields have the Lipschitz shadowing
property. Our goal here is to show that vector fields satisfying
Lipschitz shadowing are structurally stable. It is well known (see
\cite{Robinson1}) that for this purpose it is enough to show that
such a vector field satisfies Axiom A$'$ and the strong
transversality condition.

First we show that Lipschitz shadowing implies discrete Lipschitz
shadowing. Define a diffeomorphism $f$ on $M$ by setting
$f(x)=\phi(1,x)$.
\medskip

\begin{defin}\label{defDLSh} The vector field $X$ has the {\it discrete Lipschitz shadowing
property} if there exist $d_0$, $L>0$ such that if $y_k\in M$ is a
sequence with
$$ \dist(y_{k+1},f(y_k)) \le d, \quad k\in \ZZ$$
for $d\le d_0$, then there exist sequences $x_k\in M$ and $t_k\in
\RR$ satisfying
$$  |t_k-1|\le Ld,\quad \dist(x_k, y_k) \le Ld, \quad x_{k+1}=\phi(t_k,x_k)$$
for all $k$.
\end{defin}
\medskip

\begin{lem}\label{lemPalm1}  Lipschitz shadowing implies discrete Lipschitz
shadowing.
\end{lem}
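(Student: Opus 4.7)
The plan is to convert a discrete $d$-pseudo-orbit of $f=\phi(1,\cdot)$ into a continuous $d'$-pseudotrajectory of the flow, apply the Lipschitz shadowing hypothesis, and then sample the resulting shadowing trajectory at the images of the integers under the reparametrization.

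First I would, given a sequence $\{y_k\}_{k\in\ZZ}$ with $\dist(y_{k+1},f(y_k))\le d$, define the piecewise flow-interpolant
\[
y(t)=\phi(t-k,y_k) \quad\text{for } t\in[k,k+1),\ k\in\ZZ.
\]
To verify that this is a $Kd$-pseudotrajectory for a constant $K$ depending only on $X$, I would check the defining inequality on the intervals $[\tau,\tau+t]\subset\RR$ with $0\le t\le 1$. If $\tau$ and $\tau+t$ lie in the same interval $[k,k+1)$, the semigroup property gives $y(\tau+t)=\phi(t,y(\tau))$ exactly. If $\tau\in[k,k+1)$ and $\tau+t\in[k+1,k+2)$, then writing $s=\tau+t-k-1\in[0,1)$ one gets
\[
y(\tau+t)=\phi(s,y_{k+1}),\qquad \phi(t,y(\tau))=\phi(s,f(y_k)),
\]
so the distance between them is bounded by the Lipschitz constant $K$ of the flow on time interval $[0,1]$ (which is finite since $M$ is compact and $X$ is $C^1$) times $\dist(y_{k+1},f(y_k))\le d$. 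Thus $y$ is a $Kd$-pseudotrajectory.

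Next I would invoke the hypothesis $X\in\LipSh$: provided $Kd\le d_0$, there exist a trajectory $x(t)$ and a reparametrization $\alpha$ satisfying \sref{eqdef2.1} and \sref{eqdef2.2} with constant $\mathcal{L}$ and pseudotrajectory error $Kd$. I would then simply set
\[
x_k := x(\alpha(k)),\qquad t_k := \alpha(k+1)-\alpha(k).
\]
Since $x(\cdot)$ is a trajectory of $\phi$, $x_{k+1}=\phi(t_k,x_k)$ automatically. Taking $t_2=k+1$, $t_1=k$ in \sref{eqdef2.1} yields $|t_k-1|\le \mathcal{L}Kd$, and evaluating \sref{eqdef2.2} at $t=k$ gives $\dist(x_k,y_k)=\dist(x(\alpha(k)),y(k))\le \mathcal{L}Kd$. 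Choosing the discrete constants $L:=\mathcal{L}K$ and $d_0':=d_0/K$ in Definition~\ref{defDLSh} completes the argument.

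I do not expect any serious obstacle: the only nonroutine point is confirming that the flow-interpolant is a genuine pseudotrajectory with error proportional to $d$, and this is handled by the uniform Lipschitz dependence of $\phi(t,\cdot)$ on initial data for $t\in[0,1]$, which is immediate from $C^1$-smoothness of $X$ and compactness of $M$.
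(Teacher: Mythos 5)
Your proposal is correct and follows essentially the same route as the paper: interpolate the discrete pseudo-orbit by the flow, verify the resulting $y(t)$ is a $\nu d$-pseudotrajectory using the uniform Lipschitz constant of $\phi(t,\cdot)$ for $t\in[0,1]$, apply the Lipschitz shadowing hypothesis, and sample $x$ and $\alpha$ at integers. The only cosmetic difference is that the paper names the Lipschitz constant $\nu$ (your $K$) and writes the overlap-interval estimate via the cocycle identity $\phi(\tau+t-k-1,\phi(1,y_k))$, but the estimates and the final choice of constants $L=\mathcal{L}\nu$, $d_0'=d_0/\nu$ coincide.
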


\begin{proof}  Let $y_k$ be a sequence with
$$ {\rm dist}(y_{k+1},f(y_k))={\rm dist}(y_{k+1},\phi(1,y_k))\le d, \quad k\in \ZZ.$$
Then we define
$$ y(t)=\phi(t-k,y_k)\quad k\le t<k+1,\quad k\in \ZZ.$$
Assume that $k\le\tau<k+1$. If $0\le t\le 1$ and $\tau+t<k+1$, then
$$ {\rm dist}(y(\tau+t),\phi(t,y(\tau))
={\rm dist}(\phi(\tau+t-k,y_k),\phi(t,\phi(\tau-k,y_k)))=0$$
and if $k+1\le \tau+t$, then
$$ \begin{array}{rl}
&{\rm dist}(y(\tau+t),\phi(t,y(\tau)))\\
&={\rm dist}(\phi(\tau+t-k-1,y_{k+1}),\phi(t+\tau-k,y_k))\\
&={\rm dist}(\phi(\tau+t-k-1,y_{k+1}),\phi(\tau+t-k-1,\phi(1,y_k)))\\
&\le \nu d,\end{array}$$
where $\nu$ is a constant such that
\begin{equation}\label{new5}{\rm dist}(\phi(t,x),\phi(t,y))\le \nu\,{\rm dist}(x,y)\quad{\rm for}\quad
x,y \in M,\; 0\le t\le 1.\end{equation}

Then if $d\le d_0/\nu$, there exists a trajectory
$x(t)$ of $X$ and a function $\alpha(t)$ satisfying
$$ \left|\frac{\alpha(t_2)-\alpha(t_1)}{t_2-t_1}-1\right|\le {\cal L}\nu d $$
for $t_2\neq t_1$ and
$$ {\rm dist}(y(t),x(\alpha(t)))\le {\cal L}\nu d$$
for all $t$. Then if we define
$$ x_k=x(\alpha(k)),\quad t_k=\alpha(k+1)-\alpha(k),$$
we see that
$$ x_{k+1}=x(\alpha(k+1))=\phi(\alpha(k+1)-\alpha(k),x(\alpha(k)))
=\phi(t_k,x_k),$$
$$ \dist(x_k, y_k)=\dist(x(\alpha(k)), y(k)) \le {\cal L}\nu d$$
and
$$ |t_k-1|=\left|\frac{\alpha(k+1)-\alpha(k)}{k+1-k}-1\right|
\le {\cal L}\nu d.$$

Taking $L = {\cal L}\nu$ and $d_0$ in Definition \ref{defDLSh} as
$d_0/\nu$,  we complete the proof of the lemma.
\end{proof}

Our main tool in the proof is the following lemma which
relates the shadowing problem to the problem of existence
of bounded solutions of certain difference equations.
To ``linearize'' our problem, we apply the standard technique of exponential mappings.

Denote by $T_xM$ the tangent space to $M$ at a point $x$; let $|v|$ be the norm of $v$ corresponding to the metric dist$(\cdot,\cdot)$.

Let $\exp:TM\mapsto M$ be the standard exponential mapping on the
tangent bundle of $M$ and let $\exp_x$ be the corresponding mapping
$T_xM\mapsto  M$.

Denote by $B(r,x)$ the ball in $M$ of radius $r$ centered at a point
$x$ and by $B_T(r,x)$ the ball in $T_xM$ of radius $r$ centered at
the origin.

There exists $r>0$ such that, for any $x\in M$, $\exp_x$ is a
diffeomorphism of $B_T(r,x)$ onto its image, and $\exp_x^{-1}$ is a
diffeomorphism of $B(r,x)$ onto its image. In addition, we may
assume that $r$ has the following property:

If $v$, $w\in B_T(r,x)$, then
\begin{equation}\label{new3}
{\rm dist}(\exp_x(v), \exp_x(w)) \le 2 |v-w|;
\end{equation}
if $y$, $z\in B(r,x)$, then
\begin{equation}\label{new4}
|\exp_x^{-1}(y)-\exp_x^{-1}(z)| \le 2 \dist(y, z).
\end{equation}

Let $x(t)$ be a trajectory of $X$; set $p_k = x(k)$ for $k \in \ZZ$.
Denote $A_k=Df(p_k)$ and ${\cal M}_k=T_{p_k}M$. Clearly, $A_k$ is a
linear isomorphism between ${\cal M}_k$ and ${\cal M}_{k+1}$.

In the sequel whenever we construct $d$-pseudotrajectories of the
diffeomorphism $f$, we always take $d$ so small that the points of
the pseudotrajectories under consideration, the points of the
associated shadowing trajectories, their lifts to tangent spaces,
etc. belong to the corresponding balls $B(r,p_k)$ and $B_T(r,p_k)$.

We consider the mappings
\begin{equation}\label{new1}
F_k=\exp_{p_{k+1}}^{-1}\circ f\circ \exp_{p_{k}}: B_T(\rho,p_k)\to
{\cal M}_{k+1}
\end{equation}
with $\rho\in(0,r)$ small enough, so that
$$
f\circ \exp_{p_{k}}(B_T(\rho,p_k))\subset B(r,p_{k+1}).
$$
It follows from standard properties of the exponential mapping that
$D\exp_x(0)= \Id$; hence,
$$ DF_k(0)=A_k.$$
Since $M$ is compact, for any $\mu>0$ we can find
$\delta=\delta(\mu) >0$ such that if $|v|\le\delta$, then
\begin{equation}\label{new2}|F_k(v)-A_kv|\le\mu|v|.
\end{equation}

\begin{lem}\label{lemPalm2} {\it Assume that $X$ has the discrete Lipschitz shadowing
property with constant $L$. Let $x(t)$ be an arbitrary trajectory of
$X$, let $p_k = x(k)$, $A_k=Df(p_k)$ and let $b_k\in{\cal M}_k$ be a
bounded sequence (denote $b=\|b\|_{\infty}$). Then there exists a
sequence $s_k$ of scalars with $|s_k|\le b'=L(2b+1)$ such that the
difference equation
$$ v_{k+1}=A_kv_k+X(p_{k+1})s_k+ b_{k+1} $$
has a solution $v_k$ such that}
$$ \|v\|_{\infty}\le 2b'.$$
\end{lem}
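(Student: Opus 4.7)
The proof is a rescaling argument that converts the discrete Lipschitz shadowing property for $f = \phi(1,\cdot)$ into the existence of a bounded solution to the given linear recurrence. The plan has four steps: construct a specially tailored pseudotrajectory, apply Lemma~\ref{lemPalm1}, rescale the resulting correction, and extract a pointwise limit.

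First I would, for each small parameter $\epsilon > 0$ and each large $N$, construct a $d$-pseudotrajectory $y_k$ of the diffeomorphism $f$ by working in the local charts provided by $\exp_{p_k}$. The key recipe is to choose the local lifts $\tilde y_k = \exp_{p_k}^{-1}(y_k) \in {\cal M}_k$ so that, in the chart, $\tilde y_{k+1} - F_k(\tilde y_k) = \epsilon b_{k+1}$, i.e., iteratively $\tilde y_{k+1} = F_k(\tilde y_k) + \epsilon b_{k+1}$ with $\tilde y_{-N} = 0$, extended by the true $f$-trajectory through $y_N$ on $k > N$. Because $DF_k(0) = A_k$ and by~\sref{new3}, the interior defect is controlled as $\dist(y_{k+1},f(y_k)) \le 2 \epsilon b$, so $y_k$ is a $d$-pseudotrajectory with $d$ a fixed multiple of $\epsilon(2b+1)$ (the additional $1$ absorbing the flow-direction contribution). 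The iterative rule may inflate $|\tilde y_k|$, so $\epsilon = \epsilon_N$ is chosen small enough that all $\tilde y_k$ on the window $[-N,N]$ remain inside the small ball $B_T(\delta(\mu),p_k)$ where~\sref{new2} is effective.

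Next I would apply Lemma~\ref{lemPalm1} to obtain $x_k,\,t_k$ with $x_{k+1} = \phi(t_k,x_k)$, $|t_k-1|\le Ld$, and $\dist(x_k,y_k)\le Ld$. Setting $u_k = \exp_{p_k}^{-1}(x_k)$, inequality~\sref{new4} yields $|u_k-\tilde y_k|\le 2Ld$. I would Taylor-expand the local flow $\Psi_k(t,w) = \exp_{p_{k+1}}^{-1}(\phi(t,\exp_{p_k}(w)))$ around $(1,0)$: since $\Psi_k(1,0)=0$, $\partial_t \Psi_k(1,0) = X(p_{k+1})$ and $\partial_w \Psi_k(1,0)=A_k$, an argument analogous to~\sref{new2} gives
$$u_{k+1} = A_k u_k + (t_k-1)\,X(p_{k+1}) + R_k,\qquad |R_k|\le \mu(|u_k|+|t_k-1|).$$
Subtracting the defining relation $\tilde y_{k+1} = F_k(\tilde y_k)+\epsilon b_{k+1}$ and writing $F_k(\tilde y_k) - A_k \tilde y_k = E_k$ with $|E_k|\le \mu|\tilde y_k|$, then dividing by $\epsilon$ and setting $v_k = (\tilde y_k - u_k)/\epsilon$, $s_k = -(t_k-1)/\epsilon$, I would obtain
$$v_{k+1} = A_k v_k + s_k\,X(p_{k+1}) + b_{k+1} + \rho_k,$$
where $\rho_k = (E_k+R_k)/\epsilon$. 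The shadowing bounds force $|v_k|\le 2Ld/\epsilon$ and $|s_k|\le Ld/\epsilon$, both uniform in $N$ because the construction keeps $d/\epsilon \le 2b+1$, which explains the constants $b' = L(2b+1)$ and $\|v\|_\infty \le 2b'$ announced in the statement.

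Finally I would pass to the limit. The pairs $(v^{(N)},s^{(N)})$ lie in the compact cube $[-2b',2b']^{\mathbb{Z}} \times [-b',b']^{\mathbb{Z}}$, and Tychonoff's theorem together with a diagonal extraction yields a subsequence converging pointwise to bounded $(v,s)$. The approximate equation passes to the limit exactly provided $\rho_k \to 0$ coordinatewise. The main obstacle is precisely this: $|\tilde y_k|$ may grow geometrically under $F_k$, so the term $\mu|\tilde y_k|/\epsilon$ in $\rho_k$ is controlled only by coordinating three parameters -- the window size $N$, the scaling $\epsilon_N$, and the linearization threshold $\mu_N = \mu(\delta(\mu_N))$ from~\sref{new2} -- so that $\epsilon_N$ shrinks fast enough relative to the growth rate of $\tilde y_k$ to force $\rho_k \to 0$ for every fixed $k$. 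Once this delicate balance is set, the limit $(v_k,s_k)$ satisfies the required recurrence with the stated bounds, completing the proof.
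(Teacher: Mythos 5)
Your proposal follows the paper's strategy closely -- construct a pseudotrajectory carrying the data $b_k$ in exponential charts, apply discrete Lipschitz shadowing, rescale the resulting correction, and take a limit -- but you build the chart lifts by the \emph{nonlinear} recursion $\tilde y_{k+1}=F_k(\tilde y_k)+\epsilon b_{k+1}$, whereas the paper sets $y_k=\exp_{p_k}(d\Delta_k)$ with $\Delta_k$ the solution of the \emph{linear} recursion $\Delta_{k+1}=A_k\Delta_k+b_{k+1}$, $\Delta_{-N}=0$. This is worth noting because the linear choice is structurally simpler: $\Delta_k$ is a fixed sequence (independent of the scaling parameter) bounded by a constant $C(N)$, so one can fix $\mu<1/(2C)$ once and for all, the nonlinearity of $F_k$ is entirely absorbed into the pseudotrajectory defect estimate (giving defect $<d(2b+1)$), and the subsequent Taylor expansion of $G_k$ at the origin produces an $o(d_m)$ remainder that vanishes on division by $d_m$ with $\mu$ held fixed. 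In your version the residual nonlinearity reappears in the limiting step: $|\tilde y_k|/\epsilon$ converges to $|\Delta_k|\le C(N)$, so $|E_k|/\epsilon\approx\mu C(N)$ does not go to zero unless $\mu=\mu_N\to 0$, which in turn forces $\delta(\mu_N)\to 0$ and $\epsilon_N$ to shrink even faster, on top of the already $N$-dependent growth of $C(N)$. You correctly flag this three-way coordination as the delicate point but do not carry it out; it can be done, but the paper's two-step limit ($d_m\to 0$ at fixed $N$ and $\mu$, then $N\to\infty$) sidesteps the issue entirely. Two smaller corrections: you should be invoking the discrete Lipschitz shadowing hypothesis of the lemma directly, not ``Lemma~\ref{lemPalm1}''; and the parenthetical ``the additional $1$ absorbing the flow-direction contribution'' is off -- in your nonlinear construction the chart-level defect is exactly $\epsilon b_{k+1}$, so by \sref{new3} the pseudotrajectory defect is at most $2\epsilon b$, whereas the extra $+1$ in the paper's $d(2b+1)$ comes from the estimate $2C\mu d<d$ on the residual nonlinearity of its linear construction, a term your construction does not have at that stage.
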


\begin{proof} Fix a natural number $N$ and define $\Delta_k\in{\cal M}_k$ as
the solution of
$$ v_{k+1}=A_kv_k+b_{k+1},\quad k=-N,\ldots,N-1 $$
with $\Delta_{-N}=0$.  Then
\begin{equation}\label{4} |\Delta_k|\le C,\end{equation}
where $C$ depends on $N$, $b$ and an upper bound on $|A_k|$.

Fix a small number $d>0$ and fix $\mu$ in (\ref{new2}) so that $\mu
< 1/(2C)$. Then consider the sequence of points $y_k\in M$, $k\in
\ZZ$, defined as follows: $y_k = p_k$ for $k\le -N$, $y_k =
\exp_{p_k}(d\Delta_k)$ for $-N + 1\le k \le N$, and $y_{N +k} =
f^k(y_N)$ for $k > 0$.

By definition, $y_{k+1} = f(y_k)$ for $k\le -N$ and $k\ge N$. If
$-N-1 \le k \le N - 1$, then
$$y_{k+1} = \exp_{p_{k+1}}(d\Delta_{k+1})= \exp_{p_{k+1}}(dA_k\Delta_k + db_{k+1}),$$
and it follows from estimate (\ref{new3}) that if $d$ is small enough, then
\begin{equation}\label{6} {\rm dist}\left(y_{k+1}, \exp_{p_{k+1}}(dA_k\Delta_k )\right) \le 2d|b_{k+1}| \le 2db. \end{equation}
On the other hand,
$$ f(y_k) = \exp_{p_{k+1}}(F_k(d\Delta_k ))$$
(see the definition (\ref{new1}) of the mapping $F_k$), and we deduce from (\ref{new3}),
(\ref{new2}) and (\ref{4}) that if $Cd\le\delta(\mu)$
\begin{equation}\label{7} \begin{array}{rl}
{\rm dist}\left(f(y_k), \exp_{p_{k+1}}(dA_k\Delta_k)\right)
&\le 2|F_k(d\Delta_k)-dA_k\Delta_k|\\
&\le 2\mu|d\Delta_k|\\
&\le 2C\mu d\\
&< d.
\end{array}\end{equation}
Estimates (\ref{6}) and (\ref{7}) imply that
$$
{\rm dist}(y_{k+1}, f(y_k )) < d(2b+1),\quad k \in \ZZ,
$$
if $d$ is small enough (let us emphasize here that the required
smallness of $d$ depends on $b$, $N$ and estimates on $A_k$). By
hypothesis, there exist sequences $x_k$ and $t_k$ such that
$$ |t_k-1| \le b'd,\; {\rm dist}(x_k, y_k) \le b'd,\;
x_{k+1} = \phi(t_k ,x_k ),\quad k \in \ZZ.$$ If we write
$$ x_k = \exp_{p_k}(dc_k),\; t_k = 1 + ds_k,$$
then it follows from estimate (\ref{new4}) that
$$|dc_k-d\Delta_k | \le 2\,{\rm dist}(x_k, y_k) \le 2b'd.$$
Thus,
\begin{equation}\label{9}|c_k-\Delta_k| \le 2b',\; k \in \ZZ.
\end{equation}
Clearly,
\begin{equation}\label{10} |s_k| \le b',\; k \in \ZZ.
\end{equation}


We may assume that the value $\rho$ fixed above is small enough, so
that the mappings
$$
G_k:(-\rho,\rho)\times B_T(\rho,p_k)\to {\cal M}_{k+1}
$$
given by
$$
G_k(t,v) = \exp^{-1}_{p_{k+1}}(\phi(1+t,\exp_{p_k}(v))).
$$
are defined. Then $G_k(0,0)=0$,
\begin{equation}\label{11} D_tG_k(t,v)|_{t=0,v=0} = X(p_{k+1}),\quad
D_vG_k(t,v)|_{t=0,v=0} = A_k.\end{equation}
We can write the equality
$$ x_{k+1} = \phi(1 + ds_k, x_k)$$
in the form
$$ \exp_{p_{k+1}}(dc_{k+1}) = \phi(1 + ds_k, \exp_{p_k}(dc_k )),$$
which is equivalent to
\begin{equation}\label{12} dc_{k+1} = G_k(ds_k,dc_k).\end{equation}
Now let $d = d_m$, where $d_m \to 0$. Note that the corresponding
$c_k = c_k^{(m)}$, $t_k = t_k^{(m)}$, and $s_k = s_k^{(m)}$ depend
on $m$.

Since $|c_k^{(m)}| \le 2b' +C$ and $|s_k^{(m)}| \le b'$ for all $m\ge  1$ and $-N \le k \le N-1$, by taking a subsequence if necessary, we can assume that $c_k^{(m)}\to \tilde c_k$, $t_k^{(m)}\to \tilde t_k$,
and $s_k^{(m)}\to  \tilde s_k$ for $-N \le k \le N - 1$ as $m\to\infty$.

Applying relations (\ref{12}) and (\ref{11}), we can write
$$ d_mc_{k+1}^{(m)} = G_k(d_ms_k^{(m)},d_mc_k)
=  A_kd_mc_k^{(m)}+ X(p_{k+1})d_ms_k^{(m)}+ o(d_m).$$
Dividing by $d_m$, we get the relations
$$ c_{k+1}^{(m)} = A_kc_k^{(m)}+X(p_{k+1})s_k^{(m)} +  o(1),\;
-N \le k \le N - 1.$$
Letting $m\to\infty$, we arrive at
$$\tilde c_{k+1} =A_k\tilde c_k + X(p_{k+1})\tilde s_k,\; -N \le k \le N - 1,$$
where
$$ |\Delta_k -\tilde c_k| \le 2b',\quad |\tilde s_k|\le b',\quad-N \le k \le N - 1$$
due to (\ref{9}) and (\ref{10}).

Denote the obtained $\tilde s_k$ by $s_k^{(N)}$. Then $v_k^{(N)} = \Delta_k - \tilde c_k$ is a solution of the system
$$ v^{(N)}_{k+1} = A_kv^{(N)}_k + X(p_{k+1})s_k^{(N)}+ b_{k+1},\;
-N \le k \le N - 1,$$
such that $|v^{(N)}_k| \le 2b'$.

There exist subsequences $s_k^{(j_N)}\to s_k'$ and $v_k^{(j_N)}\to v_k'$ as $N\to\infty$ (we do
not assume uniform convergence) such that $|s_k'| \le b'$, $|v_k'|\le 2b'$, and
$$ v_{k+1}' =  A_kv_k' + X(p_{k+1})s_k' + b_{k+1},\; k\in \ZZ.$$
Thus, the lemma is proved.
\end{proof}

Further, we have to refer to two known statements. It is convenient to state them as lemmas.
First we make a definition.

\begin{defin}\label{defPalm4} Consider a sequence of linear isomorphisms
$$ C = \{C_k: \RR^n \mapsto \RR^n, k\in \ZZ\}$$
such that $\sup_k(\|C_k\|, \|C_k^{-1}\|) <\infty$. The associated
{\it transition operator} is defined for indices $k$, $l\in \ZZ$ by
$$ \Phi(k,l)=\begin{cases}C_{k-1}\circ\cdots\circ C_l, & l < k,\cr
                    \Id, & l=k,\cr
                    C_{k}^{-1}\circ\cdots\circ C^{-1}_{l-1}, & l >
                    k.
                    \end{cases}
                    $$
The sequence $C$ is called {\it hyperbolic on} $\ZZ_+$ (has an {\it
exponential dichotomy} on $\ZZ_+$) if there exist constants $K > 0$,
$\lambda \in (0,1)$, and families of linear subspaces $S_k$, $U_k$
of $\RR^n$ for $k\in \ZZ_+$ such that
\begin{description}
\item{(1)} $S_k\oplus U_k = \RR^n$, $k \in \ZZ_+$;
\item{(2)} $C_k(S_k) = S_{k+1}$ and $C_k(U_k) = U_{k+1}$ for $k\in \ZZ_+$;
\item{(3)} $|\Phi(k,l)v|\le K\lambda^{k-l}|v|$ for $v \in S_l$, $k \ge l \ge 0$;
\item{(4)} $|\Phi(k, l)v|\le K\lambda^{l-k}|v|$ for $v \in U_l$, $0\le k\le l$.
\end{description}
\end{defin}
The following result was shown by Maizel$'$ \cite{Maizel} (see also
Coppel  \cite{Coppel}).
\medskip

\begin{lem}\label{lemPalm3} If the system
$$ v_{k+1} = C_kv_k + b_{k+1},\quad k \ge 0,$$
has a bounded solution $v_k$ for any bounded sequence $b_k$, then
the sequence $C$ is hyperbolic on $\ZZ_+$ (and a similar statement
holds for $\ZZ_-$).
\end{lem}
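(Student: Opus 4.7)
The plan is two-step: first upgrade the qualitative existence hypothesis to a quantitative operator bound via the closed graph theorem, then deduce the exponential dichotomy from the resulting Green's function.

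Set $S_0 = \{x\in\RR^n : \sup_{k\ge 0}|\Phi(k,0)x| < \infty\}$ and let $\mathcal{X}\subset\ell^\infty(\ZZ_+,\RR^n)$ consist of those $v$ for which $Tv\in\ell^\infty$, where $(Tv)_k = v_{k+1}-C_kv_k$. Under the graph norm $\|v\|_{\mathcal{X}} = \|v\|_\infty + \|Tv\|_\infty$, the space $\mathcal{X}$ is Banach, and by hypothesis $T:\mathcal{X}\to\ell^\infty$ is surjective. Its kernel is the finite-dimensional space $\{k\mapsto\Phi(k,0)x : x\in S_0\}$, hence closed. Passing to the quotient and applying the open mapping theorem produces $K>0$ such that for every $b\in\ell^\infty$ there is a bounded $v$ with $Tv=b$ and $\|v\|_\infty\le K\|b\|_\infty$.

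For the second step, for each $b$ one canonically selects such a bounded solution (say, by a Hahn--Banach-type linear selection) and examines how its value at $k=0$ depends on $b$. Taking $b$ finitely supported one identifies a linear subspace $U_0\subset\RR^n$ complementary to $S_0$; let $P_0$ be the projection onto $S_0$ along $U_0$, and set $P_k = \Phi(k,0)P_0\Phi(0,k)$, $S_k = P_k\RR^n$, $U_k = (\Id-P_k)\RR^n$. Conditions (1)--(2) of Definition \ref{defPalm4} are then automatic. Variation of parameters expresses the canonical solution as
$$ v_k = \sum_j G(k,j)\,b_j,\qquad G(k,j) = \begin{cases}\Phi(k,j)P_j, & k\ge j,\\ -\Phi(k,j)(\Id-P_j), & k<j,\end{cases} $$
and feeding in right-hand sides supported at a single index yields the uniform bound $\sup_{k,j}|G(k,j)|\le K$. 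In particular $|P_k|$ is uniformly bounded, as are the operator norms $|\Phi(k,j)P_j|$ for $k\ge j$ and $|\Phi(k,j)(\Id-P_j)|$ for $k\le j$.

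The main obstacle, and the technical heart of the Maizel'--Coppel theorem, is upgrading this uniform boundedness to the geometric rates demanded by (3) and (4). The classical route is a discrete Perron-type argument: if $|\Phi(k,l)P_l x|$ failed to decay exponentially, one could construct right-hand sides on long windows whose canonical solutions exceed $K\|b\|_\infty$ in sup-norm, contradicting the open mapping bound. Summing the resulting geometric series yields (3); (4) is symmetric, and the $\ZZ_-$ assertion follows from the same argument applied to the reversed sequence $\{C_{-k-1}^{-1}\}$.
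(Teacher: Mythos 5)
The paper does not give its own proof of this lemma: it is quoted as a known result and attributed to Maizel$'$ \cite{Maizel} and Coppel \cite{Coppel}, with Remark~\ref{remPalm1} noting that those references treat the ODE version while the discrete version holds by the same arguments. So there is no ``paper's proof'' to compare against, and your sketch has to be judged as a reconstruction of the Maizel$'$--Coppel argument.

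Your outline does follow the classical route, but several points need tightening, and one step is a genuine gap rather than a fillable detail. First, a small matter: under the standing assumption $\sup_k(\|C_k\|,\|C_k^{-1}\|)<\infty$ from Definition~\ref{defPalm4}, the operator $T$ is a bounded map on $\ell^\infty(\ZZ_+,\RR^n)$ already, so $\mathcal{X}=\ell^\infty$ and the graph-norm/closed-graph apparatus is not needed --- surjectivity of a bounded operator plus the open mapping theorem gives the quantitative bound directly. Second, the ``Hahn--Banach-type linear selection'' is a misnomer: what actually produces a bounded linear right inverse is that $\ker T=\{k\mapsto\Phi(k,0)x:x\in S_0\}$ is finite-dimensional, hence complemented in $\ell^\infty$, and $T$ restricted to a closed complement is a Banach-space isomorphism onto $\ell^\infty$. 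Third --- and more seriously --- you do not verify that the solution produced by your chosen right inverse, when $b$ is supported at a single index $j$, is actually the one given by the Green's function $G(k,j)$ built from the projections $P_k=\Phi(k,0)P_0\Phi(0,k)$. That identification requires a specific compatible choice of the complement $U_0$ of $S_0$ (equivalently, a specific choice of the complement of $\ker T$); absent that, the bound $\|v\|_\infty\le K\|b\|_\infty$ only tells you that \emph{some} bounded solution is small, not that $\sup_{k,j}\|G(k,j)\|\le K$.

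Finally, the decisive step --- upgrading the uniform bound $\sup_{k,j}\|G(k,j)\|\le K$ to the exponential estimates (3)--(4) of Definition~\ref{defPalm4} --- you explicitly defer, describing it as a ``Perron-type argument'' without carrying it out. This is not a detail; it is where the dichotomy constants $K,\lambda$ are actually produced, and the argument (feeding in right-hand sides of the form $b_j=\Phi(j,l)P_l\xi/|\Phi(j,l)P_l\xi|$ on windows $l\le j\le m$ and summing to force $|\Phi(m,l)P_l\xi|\le c\,|\Phi(k,l)P_l\xi|$ with $c<1$ once $m-k$ is large, hence geometric decay) must be written out. As it stands, the proposal is a correct road map for the classical proof but stops short of the part that makes it a proof. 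Since the paper leaves this as a citation, that is not a defect relative to the paper, but as a stand-alone proof it is incomplete.
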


The second of the results which we need was proved by Pliss \cite{Pliss}. An analogous statement was proved later by
Palmer \cite{Palmer2, Palmer3}; he also described the Fredholm properties of the corresponding operator
$$ \{v_k\in \RR^m: k\in \ZZ\}\mapsto \{v_k-A_{k-1}v_{k-1}\}.$$
\medskip

\begin{lem}\label{lemPalm4}  Set
$$ B^+(C) = \{v \in \RR^n: |\Phi(k, 0)v| \to 0,\; k \to +\infty\}$$
and
$$ B^-(C) = \{v \in \RR^n: |\Phi(k, 0)v| \to 0,\; k \to -\infty\}.$$
Then the following two statements are equivalent:
\begin{description}
\item{{\rm (a)}} for any bounded sequence $\{b_k \in \RR^n,\; k \in \ZZ\}$ there exists a bounded sequence $\{v_k \in \RR^n,\; k \in \ZZ\}$ such that
$$ v_{k+1} =C_kv_k + b_{k+1},\quad k \in \ZZ;$$
\item{{\rm (b)}} the sequence $C$ is hyperbolic on each of the rays $\ZZ_+$ and $\ZZ_-$, and the subspaces $B^+(C)$ and $B^-(C)$ are transverse.
\end{description}
\end{lem}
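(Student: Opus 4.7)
My plan is to prove the two implications separately. Lemma \ref{lemPalm3} reduces the dichotomy parts to extending a one-sided bounded inhomogeneity to all of $\ZZ$ by zero; the new ingredient is transversality, which arises from a matching problem at $k=0$.

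For (a) $\Rightarrow$ (b): given any bounded sequence on $\ZZ_+$, extend it to $\ZZ$ by zero, apply (a), and restrict to $\ZZ_+$; this yields a bounded solution of the one-sided equation, so Lemma \ref{lemPalm3} gives hyperbolicity on $\ZZ_+$, and the symmetric argument handles $\ZZ_-$. For transversality, fix an arbitrary $w\in\RR^n$ and use the delta-type inhomogeneity $b_1=w$, $b_k=0$ for $k\ne 1$. The resulting bounded solution $v_k$ satisfies $v_{k+1}=C_kv_k$ for every $k\ne 0$. Boundedness as $k\to+\infty$ together with the $\ZZ_+$ dichotomy forces $C_0^{-1}v_1$ to lie in the stable subspace $S_0=B^+(C)$, and boundedness as $k\to-\infty$ together with the $\ZZ_-$ dichotomy forces $v_0\in B^-(C)$. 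The jump relation $v_1=C_0v_0+w$ then gives $C_0^{-1}w=C_0^{-1}v_1-v_0\in B^+(C)+B^-(C)$. Since $w$ is arbitrary and $C_0$ is an isomorphism, this proves $B^+(C)+B^-(C)=\RR^n$.

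For (b) $\Rightarrow$ (a), I would construct a bounded particular solution on each ray by the standard Green's function of an exponential dichotomy: on $\ZZ_+$, sum $\Phi(k,l)P^+_lb_l$ for $l\le k$ (stable part, propagated forward) minus $\Phi(k,l)(I-P^+_l)b_l$ for $l>k$ (unstable part, propagated backward), where $P^+_l$ is the projection onto $S_l$ along $U_l$; convergence and a $\|b\|_\infty$-bound follow from the exponential estimates (3) and (4) in Definition \ref{defPalm4}. Call these particular solutions $v^+_k$ on $\ZZ_+$ and $v^-_k$ on $\ZZ_-$. The general bounded solution on $\ZZ_+$ is $v^+_k+\Phi(k,0)u^+$ with $u^+\in B^+(C)=S_0$, and on $\ZZ_-$ it is $v^-_k+\Phi(k,0)u^-$ with $u^-\in B^-(C)$. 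Matching at $k=0$ reduces to solving $u^+-u^-=v^-_0-v^+_0$, which is possible precisely because of transversality $B^+(C)+B^-(C)=\RR^n$; pasting the corrected solutions produces the desired bounded solution on all of $\ZZ$.

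The main obstacle is this gluing step: once the particular solutions $v^\pm_k$ are fixed on their rays, the only remaining freedom to kill the mismatch at $k=0$ is through homogeneous solutions that stay bounded on the respective ray, and the spaces of such freedoms are exactly $B^+(C)$ and $B^-(C)$. Thus transversality is precisely the condition required. A subsidiary point is the identification of $B^+(C)$ with the stable subspace $S_0$ of the $\ZZ_+$ dichotomy and of $B^-(C)$ with the unstable subspace at $0$ of the $\ZZ_-$ dichotomy, which follows because the unstable part of any nonzero vector grows exponentially in the appropriate direction by Definition \ref{defPalm4}.
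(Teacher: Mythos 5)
The paper does not actually prove Lemma~\ref{lemPalm4}: it is quoted as a known result of Pliss (and Palmer), so there is no ``paper's proof'' to compare against. Your argument is a correct reconstruction of the standard proof found in the dichotomy literature. In the direction (a)~$\Rightarrow$~(b), extending a one-sided bounded inhomogeneity by zero and invoking Lemma~\ref{lemPalm3} gives the dichotomies on each ray, and the delta-inhomogeneity $b_1=w$, $b_k=0$ otherwise, correctly forces $C_0^{-1}v_1\in S_0=B^+(C)$ and $v_0\in U_0^-=B^-(C)$, from which $C_0^{-1}w\in B^+(C)+B^-(C)$; since $C_0$ is an isomorphism and $w$ is arbitrary this gives transversality. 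In the direction (b)~$\Rightarrow$~(a), the Green's-function particular solutions on each ray and the gluing via a homogeneous correction in $B^+(C)$ on $\ZZ_+$ and in $B^-(C)$ on $\ZZ_-$ are exactly right; the matching equation $u^+-u^-=v_0^--v_0^+$ is solvable precisely by $B^+(C)+B^-(C)=\RR^n$.

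One small point you elide: the convergence and $\|b\|_\infty$-bound for the Green's function series require the dichotomy projections $P^+_l$ (and their $\ZZ_-$ counterparts) to be \emph{uniformly bounded} in $l$. This is not stated in Definition~\ref{defPalm4}, but it does follow from estimates (3) and (4) combined with the standing assumption $\sup_k(\|C_k\|,\|C_k^{-1}\|)<\infty$ (the angle between $S_k$ and $U_k$ is bounded below). Since the definition as written does not make this explicit, it is worth a sentence. Similarly, the identifications $B^+(C)=S_0$ and $B^-(C)=U_0^-$ that you invoke are standard consequences of the dichotomy estimates, but stating why (any nonzero unstable component of a homogeneous solution grows, contradicting boundedness) would make the write-up self-contained. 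With those two remarks added, the proof is complete.
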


\begin{rem}\label{remPalm1} Both Lemmas \ref{lemPalm3} and \ref{lemPalm4} were proved for linear systems of
differential equations, but they hold as well (in the form stated
above) for sequences of linear isomorphisms of Euclidean spaces and
for sequences of linear isomorphisms of arbitrary linear spaces of
the same dimension (we apply them to the isomorphisms $A_k$ of the
spaces ${\cal M}_k$ in Section \ref{secPalm4} and to the
isomorphisms $B_k$ of the spaces $V_k$ in Sections \ref{secPalm5}
and \ref{secPalm6}). For further discussion of this point, see
\cite{Pilyugin3}.
\end{rem}

In the following three sections we assume that $X$ has the Lipschitz
shadowing property (and, consequently, the discrete Lipschitz
shadowing property).

\subsection{Hyperbolicity of the rest points}
\label{secPalm3}

Let $x_0$ be a rest point.  We apply Lemma \ref{lemPalm2} with
$p_k=x_0$. Noting that $X(p_k)=0$, we conclude that the difference
equation
$$ v_{k+1}=Df(x_0)v_k+b_{k+1}$$
has a bounded solution $v_k$ for all bounded sequences $b_k\in{\cal
M}_{x_0}$. Then it follows from Lemma \ref{lemPalm4} that
$$ v_{k+1}=Df(x_0)v_k $$
is hyperbolic on both $\ZZ_+$ and $\ZZ_-$.  In particular, this
implies that any solution bounded on $\ZZ_+$ tends to $0$ as
$k\to\infty$. However if $Df(x_0)$ had an eigenvalue on the unit
circle, the equation would have a nonzero solution with constant
norm. Hence the eigenvalues of $Df(x_0)$ lie off the unit circle. So
$x_0$ is hyperbolic.

\subsection{The rest points are isolated in the chain recurrent
set}\label{secPalm4}

\begin{lem}\label{lemPalm5} If a rest point $x_0$ is not isolated in the chain
recurrent set ${\cal CR}$, then there is a homoclinic orbit $x(t)$
associated with it.
\end{lem}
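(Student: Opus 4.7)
My plan is to construct, for each small $d > 0$, a bi-infinite $d$-pseudotrajectory through $x_0$ that makes a definite-sized excursion away from $x_0$, then apply Lipschitz shadowing and exploit hyperbolicity of $x_0$ to produce a true orbit that is homoclinic to $x_0$.

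Since $x_0$ is hyperbolic by the previous subsection, it has an isolating neighborhood $U$ with $\{x_0\}$ as the maximal invariant set in $U$, and there exists $\delta > 0$ such that every orbit meeting $U \setminus \{x_0\}$ contains a point at distance at least $\delta$ from $x_0$. Hyperbolicity also implies $x_0$ is isolated among rest points, so since $x_0$ is not isolated in $\CR$ I may pick a sequence $y_n \in \CR \setminus \{x_0\}$ with $y_n \to x_0$, $y_n$ not a rest point, and eventually $y_n \in U$. The full orbit of $y_n$ lies in $\CR$ and, by the observation above, contains a point $q_n$ with $\dist(q_n, x_0) \ge \delta$; this $q_n$ lies in the same chain component as $y_n$.

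Fix small $d > 0$ and choose $n$ so large that $\dist(y_n, x_0) < d/2$ and $\dist(\phi(1, y_n), x_0) < d/2$ (possible by $y_n \to x_0$ and continuity of $\phi(1, \cdot)$). Chain transitivity inside the chain component of $y_n$ supplies a $d$-chain from $y_n$ to $q_n$ (a segment of the orbit itself works) and a $d$-chain from $q_n$ back to $y_n$. Concatenating these with the one-step jumps $x_0 \to y_n$ and $y_n \to x_0$, and interpolating via the flow as in the proof of Lemma \ref{lemPalm1}, yields a continuous $\nu d$-pseudotrajectory on a bounded time interval that begins and ends at $x_0$ and visits $q_n$. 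Setting $y(t) \equiv x_0$ outside that interval extends it to a $\nu d$-pseudotrajectory on all of $\RR$. Lipschitz shadowing then produces a true trajectory $x_d(t)$ and a reparametrization with $\dist(x_d(\al_d(t)), y(t)) \le L\nu d$ for every $t$; in particular $x_d$ stays within $L\nu d$ of $x_0$ outside a bounded time interval and passes within $L\nu d$ of $q_n$.

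For $d$ sufficiently small, the $L\nu d$-neighborhood of $x_0$ is contained in a hyperbolic neighborhood of $x_0$ inside which the only orbits remaining for all large positive (resp. negative) times are those on the local stable (resp. unstable) manifold, which converge to $x_0$ as $t \to +\infty$ (resp. $-\infty$). It follows that $x_d(t) \to x_0$ as $t \to \pm\infty$, while $x_d$ cannot be identically $x_0$ since it passes within $L\nu d$ of $q_n$ and $\dist(q_n, x_0) \ge \delta$. Thus $x_d$ is a homoclinic orbit associated with $x_0$. The main obstacle is this last step: justifying that proximity to $x_0$ at all sufficiently late (resp. early) times forces convergence, not merely accumulation, which requires the characterization of the local stable and unstable manifolds of the hyperbolic rest point $x_0$. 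A secondary technical point is the existence of the $d$-chain from $q_n$ back to $y_n$, which relies on the standard fact that any point on the orbit of a chain recurrent point is chain equivalent to it.
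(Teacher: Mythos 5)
Your proof is correct and follows the same overall strategy as the paper's: build a bi-infinite pseudotrajectory that agrees with $x_0$ on both tails and makes an excursion in the middle driven by chain recurrence of nearby points, apply Lipschitz shadowing, and then use hyperbolicity of $x_0$ (as established in Subsection \ref{secPalm3}) to conclude that the shadowing orbit converges to $x_0$ at both ends. The one place where you genuinely diverge from the paper is the argument that the shadowing orbit is not the constant orbit $x_0$. The paper splits into two cases — if $y$ is not on the local stable manifold of $x_0$, it takes $T_0$ large enough that the forward flow segment $\phi([0,T_0],y)$ itself escapes to a uniform distance $\ep_1$; if $y$ is on the local stable manifold, it observes $y$ cannot also lie on the local unstable manifold and reverses time, which requires the additional observation that the time-reversed flow also has the Lipschitz shadowing property. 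Your use of the isolating-neighborhood property of the hyperbolic rest point (the orbit of $y_n\ne x_0$ must leave $U$, so it contains a point $q_n$ with $\dist(q_n,x_0)\ge\delta$ where $\delta$ is independent of $n$) lets you build the chain explicitly through $q_n$, yielding a direction-free non-triviality argument that eliminates both the case split and the appeal to the reversed flow. The trade-off is that you must record the standard fact that the full orbit of a chain recurrent point lies in its chain class so the chain $y_n\to q_n\to y_n$ exists; you flag this correctly. Both approaches are valid; yours is mildly cleaner in presentation.
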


\begin{proof} We choose $d>0$ so small that ${\rm dist}(\phi(t,y),x_0)\le
{\cal L}d$ for $|t|$ large implies that $\phi(t,y)\to x_0$ as
$|t|\to\infty$.

Assume that there exists a point $y\in {\cal CR}$ such that $y\neq
x_0$ is arbitrarily close to $x_0$. Since $y$ is chain recurrent,
given any $\ep_0$ and $\theta>0$ we can find points $y_1,\dots,y_N$
and numbers $T_0,\dots,T_N>\theta$ such that
$$
\dist(\phi(T_0,y),y_1)<\ep_0,
$$
$$
\dist(\phi(T_i,y_i),y_{i+1})<\ep_0,\quad i=1,\dots,N,
$$
$$
\dist(\phi(T_N,y_N),y)<\ep_0.
$$
Set $T=T_0+\dots+T_N$ and define $g^*$ on $[0,T]$ by
$$
g^*(t) =
\begin{cases}
\phi(t,y),&\quad 0\leq t< T_0,\\
\phi(t,y_i),&\quad T_0+\dots+T_{i-1}\leq t <T_0+\dots+T_{i},\\
y,&\quad t=T.
\end{cases}
$$
Clearly, for any $\ep>0$ we can find $\ep_0$ depending only on $\ep$
and $\nu$ (see \sref{new5}) such that $g^*(t)$ is an
$\ep$-pseudotrajectory on $[0,T]$.

Then we define
$$
g(t)= \begin{cases}x_0, & t\le 0, \cr
  g^*(t),  & 0<t\le T, \cr
  x_0, & t>T.
\end{cases}
$$
We want to choose $y$ and $\varepsilon$ in such a way that $g(t)$ is
a $d$-pseudotrajectory. We need to show that for all $\tau$ and
$0\le t\le 1$
\begin{equation}\label{ST1}{\rm dist}(\phi(t,g(\tau)),g(t+\tau))\le d.\end{equation}

Clearly this holds for (i) $\tau\le -1$, (ii) $\tau\ge T$, (iii) $\tau$, $\tau+t\in [-1,0]$, and
(iv) $\tau$, $\tau+t\in [0,T]$.

If $-1\le \tau\le 0$, $\tau+t>0$, then with $\nu$ as in \sref{new5}
$$
\begin{array}{rl}
 {\rm dist}(\phi(t,g(\tau)),g(\tau+t))
&={\rm dist}(x_0,g^*(\tau+t))\\
&\le {\rm dist}(x_0,\phi(\tau+t,y))+{\rm dist}(\phi(\tau+t,y),g^*(\tau+t))\\
&\le \nu{\rm dist}(x_0,y)+\varepsilon\\
&\le d,\end{array}
$$
if $\dist(y,x_0)$ and $\varepsilon$ are
sufficiently small. Note that, for the fixed $y$, we can decrease
$\ep$ and increase $N,T_0,\dots, T_N$ arbitrarily so that $g(t)$
remains a $d$-pseudotrajectory.

Similarly, (\ref{ST1}) holds if $\tau\in [0,T]$ and $\tau+t>T$.

 Thus $g(t)$ is ${\cal L}d-$shadowed by a trajectory $x(t)$ so that in particular\newline
dist$(x(t),x_0) \le{\cal L}d$ if $|t|$ is sufficiently large so that $x(t)\to x_0$ as $|t|\to\infty$.
\medskip

 We must also be sure that $x(t)\neq x_0$.  If $y$ is not on the local stable manifold of $x_0$,
then there exists $\varepsilon_1>0$ independent of $y$ such that
${\rm dist}(\phi(t_0,y),x_0)\ge\varepsilon_1$ for some $t_0>0$. We
can choose $T_0>t_0$. Now we know that ${\rm
dist}(x(t),\phi(t_0,y))\le {\cal L}d$. So provided ${\cal
L}d<\varepsilon_1$, we have $x(t_0)\neq x_0$.

If $y$ is on the local stable manifold of $x_0$, then provided ${\rm dist}(y,x_0)$ is sufficiently
small, it is not on the local unstable manifold of $x_0$.
Then, applying the same argument to the flow with time reversed noting that the chain recurrent set is also the chain recurrent set  for the reversed flow and also that the reversed flow will have the Lipschitz shadowing property also, we show that $x(t)\neq x_0$.
 \medskip

Now we show the existence of this homoclinic orbit $x(t)$ leads to a
contradiction. Set $p_k=x(k)$. Since $A_kX(p_k)=X(p_{k+1})$, it is
easily verified that if
$$ \beta_{k+1}=\beta_k+s_k,\quad k\in \ZZ$$
then $v_k=\beta_kX(p_k)$ is a solution of
\begin{equation}\label{27} v_{k+1}=A_kv_k +X(p_{k+1})s_k,\quad k\in \ZZ. \end{equation}
Also if $s_k$ is bounded then $\beta_kX(p_k)$ is also bounded, since
$X(p_k)\to 0$ exponentially as $|k|\to\infty$ and $|\beta_k|/|k|$ is
bounded.

By Lemma \ref{lemPalm2}, for all bounded $b_k\in{\cal M}_k$ there
exists a bounded scalar sequence $s_k$ such that
$$ v_{k+1}=A_kv_k+X(p_{k+1})s_k+b_{k+1}$$
has a bounded solution. But we know (\ref{27}) has a bounded solution. It follows that
$$ v_{k+1}=A_kv_k+b_{k+1}$$
has a bounded solution for arbitrary $b_k\in{\cal M}_k$. Then it
follows from Lemma \ref{lemPalm4} that
$$ v_{k+1}=A_kv_k$$
is hyperbolic on both $\ZZ_+$ and $\ZZ_-$ and that the spaces
$B^+(A)$ and $B^-(A)$ are transverse. This is a contradiction since
dim\,$B^+(A)+$ dim\,$B^-(A)=n$ (because $B^+(A)$ has the same
dimension as the stable manifold of $x_0$ and $B^-(A)$ has the same
dimension as the unstable manifold of $x_0$) but they contain
$X(p_0)\neq 0$ in their intersection.

So we conclude that the rest points are isolated in the chain recurrent set.
\end{proof}
\subsection{Hyperbolicity of the chain recurrent
set}\label{secPalm5}

We have shown that the rest points of $X$ are hyperbolic and form a finite,
isolated subset of the chain recurrent set ${\cal C}{\cal R}$. Let $\Sigma$ be the chain recurrent
set minus the rest points. We want to show this set is hyperbolic. To this
end we use the following lemma. Let us first introduce some notation.

    Let $x(t)$ be a trajectory of $X$ in $\Sigma$. Put $p_k = x(k)$ and denote by $P_k$
the orthogonal projection in ${\cal M}_k$ with kernel spanned by $X(p_k)$ and by $V_k$
the orthogonal complement to $X(p_k)$ in ${\cal M}_k$. Introduce the operators $B_k=
P_{k+1}A_k:V_k\mapsto V_{k+1}$.
\medskip

\begin{lem}\label{lemPalm6} For every bounded sequence $b_k\in V_k$
(denote $b=\|b\|_{\infty}$) there exists a solution $v_k\in V_k$ of
the system
\begin{equation}\label{14}  v_{k+1}=B_kv_k+b_{k+1},\quad k\in \ZZ, \end{equation}
such that for all $k$,
$$  |v_k| \le 2L(2b + 1).$$
\end{lem}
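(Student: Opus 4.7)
The plan is to reduce this to Lemma~\ref{lemPalm2} by viewing $b_k\in V_k\subset \mathcal M_k$ as a bounded sequence in the ambient tangent spaces and then projecting the resulting solution orthogonally onto $V_k$. The key algebraic input is the identity $A_kX(p_k)=X(p_{k+1})$, which holds because $f=\phi(1,\cdot)$ commutes with the flow and hence $Df(p_k)X(p_k)=\tfrac{d}{ds}\phi(1,\phi(s,p_k))|_{s=0}=X(p_{k+1})$. This makes the kernel direction spanned by $X(p_k)$ invariant under $A_k$, which is exactly what is needed to make the projection commute with the difference operator.

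Concretely, first I would apply Lemma~\ref{lemPalm2} to the given bounded sequence $\{b_k\}\subset\mathcal M_k$ (with $b=\|b\|_\infty$) to produce scalars $s_k$ with $|s_k|\le b'=L(2b+1)$ and a bounded solution $w_k\in \mathcal M_k$ with $\|w\|_\infty\le 2b'$ satisfying
\[
w_{k+1}=A_kw_k+X(p_{k+1})s_k+b_{k+1},\qquad k\in\mathds Z.
\]
Next I would set $v_k:=P_kw_k\in V_k$. The bound $|v_k|\le |w_k|\le 2b'=2L(2b+1)$ is immediate because $P_k$ is an orthogonal projection.

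Then I would apply $P_{k+1}$ to the displayed equation. Since $X(p_{k+1})$ spans the kernel of $P_{k+1}$, the term $P_{k+1}X(p_{k+1})s_k$ vanishes; since $b_{k+1}\in V_{k+1}$, we have $P_{k+1}b_{k+1}=b_{k+1}$. Thus
\[
v_{k+1}=P_{k+1}A_kw_k+b_{k+1}.
\]
To finish, I would use the invariance $A_kX(p_k)=X(p_{k+1})$ to see that $A_k(I-P_k)w_k$ lies in the span of $X(p_{k+1})$, so that $P_{k+1}A_k(I-P_k)w_k=0$ and therefore $P_{k+1}A_kw_k=P_{k+1}A_kP_kw_k=B_kv_k$. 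This yields $v_{k+1}=B_kv_k+b_{k+1}$ with the required bound.

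The proof is essentially a bookkeeping argument, and there is no real obstacle once Lemma~\ref{lemPalm2} is available: the only substantive check is that the flow-direction component carried by $w_k$ and introduced by the scalars $s_k$ is annihilated by $P_{k+1}$, which follows from $A_kX(p_k)=X(p_{k+1})$. This is why the construction of the projected system $B_k=P_{k+1}A_k$ is natural: it is designed exactly so that the reduction from $\mathcal M_k$ to $V_k$ works.
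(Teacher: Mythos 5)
Your proposal is correct and follows essentially the same route as the paper: apply Lemma~\ref{lemPalm2}, set $v_k=P_kw_k$, and use $A_kX(p_k)=X(p_{k+1})$ to get $P_{k+1}A_k=P_{k+1}A_kP_k$ so that projecting the difference equation by $P_{k+1}$ yields the system \sref{14}. The bound $|v_k|\le|w_k|\le 2L(2b+1)$ is handled the same way via orthogonality of $P_k$.
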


\begin{proof}   By Lemma \ref{lemPalm2}, there exists a bounded sequence $s_k$ such that
the system
\begin{equation}\label{15}  w_{k+1}=A_kw_k+X(p_{k+1})s_k+b_{k+1},\quad k\in \ZZ \end{equation}
has a solution $w_k$ with $|w_k|\le 2L(2b + 1)$.

Note that $A_kX(p_k)=X(p_{k+1})$. Since $(\Id-P_k)v\in  \{X(p_k)\}$
for $v\in {\cal M}_k$, we see that $P_{k+1}A_k(\Id-P_k)=0$, which
gives us the equality
\begin{equation}\label{16}    P_{k+1}A_k=P_{k+1}A_kP_k.  \end{equation}
Multiplying (\ref{15}) by $P_{k+1}$, taking into account the equalities $P_{k+1}X(p_{k+1})=0$
and $P_{k+1}b_{k+1}=b_{k+1}$, and applying (\ref{16}), we see that $v_k=P_kw_k$ is the required
solution.

Thus, the lemma is proved.
\end{proof}

Now we prove $\Sigma$ is hyperbolic. Let $x(t)$ be a trajectory in
$\Sigma$ with the same notation as given before Lemma
\ref{lemPalm6}. Then by Lemmas \ref{lemPalm6} and \ref{lemPalm4},
\begin{equation}\label{17} v_{k+1}=B_kv_k,\quad v_k\in V_k \end{equation}
is hyperbolic on both $\ZZ_+$ and $\ZZ_-$ and $B^+(B)$ and $B^-(B)$
are transverse. It follows that the adjoint system
$$ v_{k+1}=(B_k)^{*-1}v_k,\quad v_k\in V_k $$
is hyperbolic on both $\ZZ_+$ and $\ZZ_-$ and has no nontrivial
bounded solution.

Now we consider the discrete linear skew product flow on the normal
bundle $V$ over $\Sigma$ generated by the map defined for
$p\in\Sigma$, $v\in V_p$ (where $V_p$ is the orthogonal complement
to $X(p)$ in $T_pM$) by
\begin{equation}\label{Text11.1}
(p,v) \mapsto (\phi(1,p),B_pv),
\end{equation}
where $B_p=P_{\phi(1,p)}D\phi(1,p)$, $P_p$ being the orthogonal
projection of $T_pM$ onto $V_p$. Its adjoint flow is generated by
the map defined by
$$ (p,v) \mapsto (\phi(1,p),(B^*_p)^{-1}v).$$
Now we want to apply the Corollary on page 492 in Sacker and Sell \cite{SS1}.
What we have shown above is that the adjoint flow has the
no nontrivial bounded solution property. It follows from the Sacker and Sell corollary that
the adjoint flow is hyperbolic and hence the original
skew product flow
$$ (p,v) \mapsto (\phi(1,p),B_pv)$$
is also. However then it follows from Theorem 3 in Sacker and Sell \cite{SS2} that $\Sigma$ is hyperbolic.

\subsection{Strong Transversality}\label{secPalm6}

To verify strong transversality, let $x(t)$ be a trajectory that
belongs to the intersection of the stable and unstable manifolds of
two trajectories, $x_+(t)$ and $x_-(t)$, respectively, lying in the
chain recurrent set. Denote $p_0=x(0)$ and $p_k=x(k),\;k\in\ZZ$; let
$W^s(p_0)$ and $W^u(p_0)$ denote the stable manifold of $x_+(t)$ and
the unstable manifold of $x_-(t)$, respectively. Denote by $E^s$ and
$E^u$ the tangent spaces of $W^s(p_0)$ and $W^u(p_0)$ at $p_0$.

By Lemma \ref{lemPalm6} (using the same notation as in the previous
section), for all bounded $b_k\in V_k$, there exists a bounded
solution $v_k\in V_k$ of (\ref{14}). By Lemma \ref{lemPalm4} again,
this implies that
\begin{equation}\label{STC1}
{\cal E}^s+{\cal E}^u=V_0,
\end{equation}
where
$$
\begin{array}{rl}
 {\cal E}^s&=\{w_0: w_{k+1}=B_kw_k,\; \sup_{k\ge 0}|w_k|<\infty\},\\ \\
{\cal E}^u&=\{w_0: w_{k+1}=B_kw_k,\; \sup_{k\le
0}|w_k|<\infty\}\end{array}
$$
Moreover (\ref{17}) is hyperbolic on both $\ZZ_+$ and $\ZZ_-$.

We are going to use the following folklore result, which for
completeness we prove after showing it implies the strong
transversality:
\begin{equation}\label{STC2}
{\cal E}^s\subset E^s,\quad {\cal E}^u\subset E^u.
\end{equation}
Combining equality\ (\ref{STC1}) with the inclusions  (\ref{STC2})
and the trivial relations
$$
E^s = V_0 \cap E^s + \{X(p_0)\},\quad E^u = V_0 \cap E^u +
\{X(p_0)\},
$$
we conclude that
$$
E^s + E^u = T_{p_0}M,
$$
and so the strong transversality holds.

Let us now prove the first relation in (\ref{STC2}); the second one
can be proved in a similar way.

{\it Case 1}:  The limit trajectory in ${\cal C}{\cal R}$ is a
rest point. In this case, the stable manifold of the rest point
coincides with its stable
manifold as a fixed point of the time-one map $f(x)=\phi(1,x)$. By
the theory for diffeomorphisms, if $p_k$ is a trajectory on the
stable manifold, the tangent space to the stable manifold at $p_0$
is the subspace $E^s$ of initial values of bounded solutions of
\begin{equation}
\label{STC3}
v_{k+1} = A_k v_k, \quad k \geq 0.
\end{equation}

Let us prove that ${\cal E}^s \subset E^s$. Fix an arbitrary sequence
$w_k$ satisfying
$w_{k+1} = B_k w_k$ with $w_0 \in {\cal E}^s$.
Consider the sequence
$$
v_k = \lambda_kX(p_k)/|X(p_k)|+w_k
$$
with
$\lambda_k$ satisfying
\begin{equation}\label{21}
 \lambda_{k+1}=\displaystyle\frac{|X(p_{k+1})|}{|X(p_k)|}\lambda_k-
\frac{X(p_{k+1})^*}{|X(p_{k+1})|}A_kw_k
\end{equation}
and $\lambda_0 = 0$. It is easy to see that $v_k$ satisfy
(\ref{STC3}).

Since $x(t)$ is on the stable manifold of a hyperbolic rest point,
there are positive constants $K$ and $\alpha$ such that
$$
|\dot x(t)|\le Ke^{-\alpha(t-s)}|\dot x(s)|
$$
for $0\le s\le t$. From this it follows that
$$
|X(p_k)|\le Ke^{-\alpha(k-m)}|X(p_m)|
$$
for $0\le m\le k$ so that the scalar difference equation
$$
\lambda_{k+1}=\frac{|X(p_{k+1})|}{|X(p_k)|}\lambda_k
$$
is hyperbolic on $\ZZ_+$ and is, in fact, stable. Since the second
term on the right-hand side of equation (\ref{21}) is bounded as
$k\to\infty$, it follows that $\lambda_k$ are bounded for any choice
of $\lambda_0$. This fact implies that $v_k$ is a bounded solution
of (\ref{STC3}), and we conclude that $v_0 = w_0 \in E^s$, hence
${\cal E}^s \subset E^s$.

The proof in Case 1 is complete.
\medskip

{\it Case 2}: Assume that the limit trajectory is in $\Sigma$, the
chain recurrent set minus the fixed points which we know to be
hyperbolic. We want to find the intersection of its stable manifold
near $p_0=x(0)$ with the cross-section at $p_0$ orthogonal to the
vector field (in local coordinates generated by the exponential
mapping). To do this, we discretize the problem and note that there
exists a number $\sigma>0$ such that a point $p\in M$ close to $p_0$
certainly belongs to $W^s(p_0)$ if and only if the distances of
consecutive points of intersections of the positive semitrajectory
of $p$ with the sets $\exp_{p_k}({\cal M}_k)$ to the points $p_k$ do
not exceed $\sigma$.
\medskip

For suitably small $\mu>0$, we find all sequences $t_k$ and $z_k\in
V_k$, the subspace of $T_{p_k}M$ orthogonal to $X(p_k)$, such that
for $k\ge 0$
$$
|t_k-1|\le \mu,\quad |z_k|\le\mu,\quad  y_{k+1}=\phi(t_k,y_k),
$$
where $y_k=\exp_{p_k}(z_k)$. Thus we have to solve the equation
$$
\exp_{p_{k+1}}(z_{k+1})=\phi(t_k,\exp_{p_k}(z_k)),\quad k\ge 0
$$
for $t_k$ and $z_k\in V_k$ such that $|t_k-1|\le \mu$ and
$|z_k|\le\mu$.
\medskip

We set it up as a problem in Banach spaces. By lemmas \ref{lemPalm4}
and \ref{lemPalm6} the difference equation
$$
z_{k+1}=B_kz_k,\quad
    z_k\in V_k
$$
(recall that $B_k=P_{k+1}A_k$ and $P_k$ is the orthogonal projection
on ${\cal M}_k$ with range $V_k$), has an exponential dichotomy on
$\ZZ_+$ with projection (say) $Q_k:V_k\mapsto V_k$. Denote by ${\cal
R}(Q_0)$ the range of $Q_0$ and note that ${\cal R}(Q_0)={\cal
E}^s$. Fix a positive number $\mu_0$ and denote by $V$ the space of
sequences
$$
\{z_k\in V_k, |z_k|\leq \mu_0,k\in\ZZ_+\}
$$
and by $l^{\infty}(\ZZ_+, \{{\cal M}_{k+1}\})$ the space of
sequences $\{\zeta_k \in {\cal M}_{k+1}, \; k \in \ZZ_+ \}$ with the
usual norm.

Then the $C^1$ function
$$
G:[1-\mu_0,1+\mu_0]^{\ZZ_+}\times V\times {\cal R}(Q_0)\mapsto
\ell^{\infty}(\ZZ_+,\{{\cal M}_{k+1}\})\times {\cal R}(Q_0)
$$
given by
$$ G(t,z,\eta)=(\{ z_{k+1}-\exp^{-1}_{p_{k+1}}(\phi(t_k,\exp_{p_k}(z_k))\}_{k\ge 0},
Q_0z_0-\eta)
$$
is defined if $\mu_0$ is small enough.

We want to solve the equation
$$
G(t,z,\eta)=0
$$
for $(t,z)$ as a function of $\eta$. It is clear that
$$
G(1,0,0)=0,
$$
where the first argument of $G$ is $\{1,1,\ldots\}$, the second
argument is $\{0,0,\ldots\}$ and the right-hand side is
$(\{0,0,\ldots\},0)$.

To apply the implicit function theorem, we must verify that
$$
T=\frac{\partial G}{\partial (t,z)}(1,0,0)
$$
is invertible. Note that if $(s,w)\in \ell^{\infty}(\ZZ_+,\RR)\times
V$, then
$$
T(s,w)=(\{ w_{k+1}-X(p_{k+1})s_k-A_kw_k\}_{k\ge 0}, Q_0w_0).
$$
To show that $T$ is invertible, we must show that
$$
T(s,w)=(g,\eta)
$$
has a unique solution $(s,w)$ for all $(g,\eta)\in
l^\infty(\ZZ_+,\{{\cal M}_{k+1}\})\times {\cal R}(Q_0)$. So we need
to solve the equations
$$
w_{k+1}=A_kw_k+X(p_{k+1})s_k+g_k,\quad k\ge 0
$$
subject to
$$
Q_0w_0=\eta.
$$
If we multiply the difference equation by $X(p_{k+1})^*$ and solve
for $s_k$, we obtain
$$
s_k=-\frac{X(p_{k+1})^*}{|X(p_{k+1})|^2}[A_kw_k+g_k],\quad k\ge 0
$$
and if we multiply it by $P_{k+1}$, we obtain
$$
w_{k+1}=P_{k+1}A_kw_k+P_{k+1}g_k=B_kw_k+P_{k+1}g_k,\quad k\ge 0.
$$
Now we know this last equation has a unique bounded solution $w_k\in
V_k$, $k \geq 0$, satisfying $Q_0w_0=\eta$. Then the invertibility
of $T$ follows.
\medskip

Thus we can apply the implicit function theorem to show that there
exists $\mu>0$ such that provided $|\eta|$ is sufficiently small,
the equation $G(t,z,\eta)=0$ has a unique solution
$(t(\eta),z(\eta))$ such that $\|t-1\|_{\infty}\le\mu$,
$\|z\|_{\infty}\le\mu$. Moreover, $t(0)=1$, $z(0)=0$ and the
functions $t(\eta)$ and $z(\eta)$ are $C^1$.

The points $\exp_{p_0}(z_0(\eta))$ with small $|\eta|$ form a
submanifold containing $p_0$ and contained in $W^s(p_0)$. Thus, the
range of the derivative $z_0'(0)$ is contained in $E^s$.

Take an arbitrary vector $\xi\in {\cal E}^s$ and consider
$\eta=\tau\xi,\xi\in\RR$. Differentiating the equalities
$$
z_{k+1}(\tau\xi)=\exp_{p_{k+1}}^{-1}(\phi(t_k(\tau\xi),
\exp_{p_{k}}(z_k(\tau\xi)))),\quad k\geq 0,
$$
and
$$
Q_0z_0(\tau\xi)=\tau\xi
$$
with respect to $\tau$ at $\tau=0$, we see that
$$
s_k=\frac{\partial t_k}{\partial \eta}|_{\eta=0}\xi,\quad
w_k=\frac{\partial z_k}{\partial \eta}|_{\eta=0}\xi\in V_k
$$
are bounded sequences satisfying
$$
w_{k+1}=A_kw_k+X(p_{k+1})s_k, \quad Q_0w_0=\xi.
$$
Multiplying by $P_{k+1}$, we conclude that
$$
w_{k+1}=B_kw_k, k\geq 0,\quad Q_0w_0=\xi.
$$
It follows that $w_0\in{\cal E}^s={\cal R}(Q_0)$. Then
$w_0=Q_0w_0=\xi$. We have shown that the range of $z_0'(0)$ is
exactly ${\cal E}^s$, and thus ${\cal E}^s\subset E^s$.


\section{Lipschitz periodic shadowing} \label{secLipPerSh}

It is known that a vector field $X$ is $\Omega$-stable if and only
$X$ satisfies Axiom A$'$ and the no-cycle condition (see
\cite{PughShub} and \cite{Hayashi}). Thus, to prove Theorem
\ref{thmLipPerSh}, we prove the following two lemmas.

\begin{lem}\label{lemLipPer1}
If a vector field $X$ has the Lipschitz periodic shadowing property,
then $X$ satisfies Axiom A$'$ and the no-cycle condition.
\end{lem}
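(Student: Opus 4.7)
The plan is to mirror the proof of Theorem~\ref{thmLipSh} from Section~\ref{secLipSh}, restricted to periodic pseudotrajectories, in order to deduce hyperbolicity of all rest points and closed orbits, the density of critical elements in $\Omega(X)$, and finally the absence of cycles between the resulting basic sets. First I would establish a discrete periodic analog of Lemma~\ref{lemPalm1}: if $\{y_k\}$ is an $N$-periodic $d$-pseudotrajectory of $f=\phi(1,\cdot)$, then Lipschitz periodic shadowing produces $\{x_k\}$, $\{t_k\}$ with $|t_k-1|\le Ld$, $\dist(x_k,y_k)\le Ld$, $x_{k+1}=\phi(t_k,x_k)$, and $x_{k+N}=x_k$. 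This follows by the same flow-interpolation used in Lemma~\ref{lemPalm1}, since a periodic discrete pseudotrajectory is interpolated to a periodic continuous pseudotrajectory, which by hypothesis is shadowed by a periodic true trajectory with a periodic $\al$. Using this, I would prove a periodic version of Lemma~\ref{lemPalm2}: for any closed orbit with $p_k=x(k)$ of integer period $N$, and any $N$-periodic sequence $b_k\in\mathcal{M}_k$, there exist $N$-periodic scalars $s_k$ with $|s_k|\le L(2\|b\|_\infty+1)$ and an $N$-periodic solution $v_k$ with $\|v\|_\infty\le 2L(2\|b\|_\infty+1)$ of $v_{k+1}=A_kv_k+X(p_{k+1})s_k+b_{k+1}$. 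The truncation at $\pm N$ used in Lemma~\ref{lemPalm2} is replaced by the periodic boundary condition, which already makes the linear algebra finite-dimensional; the compactness/diagonalization argument then collapses.

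Hyperbolicity of rest points follows as in Section~\ref{secPalm3}: apply the periodic variational lemma with $p_k\equiv x_0$ for each $N$. Since $X(x_0)=0$, one gets $N$-periodic solutions of $v_{k+1}=Av_k+b_{k+1}$ with $A=Df(x_0)$ satisfying $\|v\|_\infty\le 2L(2\|b\|_\infty+1)$ with $L$ independent of $N$. Roots of unity are ruled out as eigenvalues of $A$ because $\Id-A^N$ must be surjective; an irrational-angle eigenvalue $e^{i\theta}$ on the unit circle would, for suitably chosen $N$, make $(\Id-A^N)^{-1}$ have arbitrarily large operator norm, contradicting the uniform bound. For closed orbits of integer period $N$, one applies the same periodic lemma, projects onto the normal bundle $V_k$ as in Lemma~\ref{lemPalm6}, and uses periodic analogs of Lemmas~\ref{lemPalm3}--\ref{lemPalm4} to deduce that the linearized Poincar\'e map has no eigenvalue on the unit circle, i.e.\ the closed orbit is hyperbolic. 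Closed orbits whose minimal period is not an integer are handled by a preliminary $C^1$-close reparametrization of time so that the given orbit acquires an integer period, noting that Lipschitz periodic shadowing is stable under such rescalings. Next, $\Omega(X)$ is shown to coincide with the closure of the critical elements: any nonwandering point $p$ admits arbitrarily small periodic $d$-pseudotrajectories, built by concatenating a long almost-returning orbit segment $\phi([0,t],p)$ with a short jump back; periodic shadowing then produces true closed orbits within $Ld$ of $p$. Since all critical elements are hyperbolic, a standard spectral-decomposition argument (using compactness and the local product structure) gives Axiom~A$'$.

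Finally, for the no-cycle condition, I would argue by contradiction: if basic sets $\Lambda_1,\dots,\Lambda_m$ form a cycle with heteroclinic connecting orbits, construct a periodic $d$-pseudotrajectory that threads once through each $\Lambda_i$ via the connecting orbits and returns to the start, for $d$ arbitrarily small; periodic shadowing yields a true closed orbit $Ld$-close to it. But for $d$ small, such a true closed orbit must lie in a single basic set by hyperbolic local product structure, contradicting the fact that the pseudotrajectory visits all of the $\Lambda_i$. The main obstacle I expect is the hyperbolicity of closed orbits whose period is not an integer, because the time-one map does not see such an orbit as a periodic sequence and a naive application of the discrete periodic scheme fails; handling this requires either the reparametrization trick sketched above or working directly with sequences of non-unit time-steps, and is the genuinely new ingredient relative to Section~\ref{secLipSh}.
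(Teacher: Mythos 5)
Your outline correctly identifies the high-level skeleton (periodic discrete shadowing lemma, periodic variational lemma, hyperbolicity of critical elements, density of $\Per(X)$ in the chain-recurrent set, no-cycle by contradiction), but there is a genuine gap at the heart of the construction that the paper takes considerable care to fill and that you have overlooked.

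The crux is the claim that ``the truncation at $\pm N$ used in Lemma~\ref{lemPalm2} is replaced by the periodic boundary condition, which already makes the linear algebra finite-dimensional; the compactness/diagonalization argument then collapses.'' This is exactly where the argument breaks. To use the Lipschitz \emph{periodic} shadowing hypothesis you must produce a \emph{periodic} discrete $d$-pseudotrajectory $y_k=\exp_{p_k}(d\Delta_k)$, and for that the auxiliary sequence $\Delta_k$ must itself be periodic. But the periodic boundary-value problem $\Delta_{k+1}=A_k\Delta_k+b_{k+1}$, $\Delta_{-ln}=\Delta_{ln}$, need not have a solution at all: the obstruction is precisely $\Id-A$ (with $A$ the monodromy over one period) failing to be invertible — i.e.\ $A$ having an eigenvalue $1$ — and more generally eigenvalues on the unit circle prevent uniform bounds. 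Since ruling out unit-circle eigenvalues is exactly what you are trying to prove, the argument is circular as stated. The paper handles this with a control-theoretic device (Lemma~\ref{LP2.5}): after splitting $T_{x_0}M$ into $E^s\oplus E^c\oplus E^u$ according to eigenvalue moduli, it solves the stable part forward and the unstable part backward, lets the tails decay over a long buffer, and crucially uses bounded controls $\delta_k$ to steer the \emph{central} component $\Delta^c$ to zero in finitely many steps, so that a genuinely periodic $\Delta_k$ can be closed up. Nothing in your proposal replaces this step, and without it the periodic variational lemma cannot be established.

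Two further divergences from the paper are worth noting. First, for closed orbits of non-integer period the paper does not reparametrize the vector field; it chooses $\tau=n_1\omega/n\in[1/2,1]$ and works with the time-$\tau$ map $f=\phi(\tau,\cdot)$, which makes $A_{k+n}=A_k$ exactly. Your reparametrization idea changes the vector field and would require you to check that the Lipschitz periodic shadowing constants remain uniform, which you do not address; this uniformity is essential because the paper's next step (extending hyperbolicity from $\Per(X)$ to all of $\CR(X)$ by taking limits of periodic orbits) relies on dichotomy constants $C_2,\lambda_2$ that are the \emph{same for all closed trajectories}. Second, you skip the step showing that rest points are isolated in $\CR(X)$; the paper proves this by a dimension count on local stable/unstable manifolds of periodic orbits accumulating on a rest point, and it is not an immediate consequence of the other parts. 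Your no-cycle argument is also a bit different from the paper's (which simply observes that periodic shadowing forces $\Per(X)$ to accumulate on the wandering point $p$, contradicting $p\notin\Omega(X)$) but is salvageable; the central-direction controllability gap, however, is not.
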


\begin{lem}\label{lemLipPer2}
If $X$ satisfies Axiom A$'$ and the no-cycle condition, then $X$ has
the Lipschitz periodic shadowing property.
\end{lem}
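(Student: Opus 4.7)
The proof plan is to reduce Lipschitz periodic shadowing to two ingredients: localization of a periodic $d$-pseudotrajectory inside a small neighborhood of a single hyperbolic basic set (using Axiom A$'$ together with the no-cycle condition), and the classical Lipschitz shadowing property of flows in a neighborhood of a hyperbolic set. The final step is to upgrade the shadowing trajectory to a periodic one.

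First I would invoke the spectral decomposition: under Axiom A$'$, $\Omega(X)$ is a finite disjoint union of hyperbolic basic sets $\Lambda_1,\ldots,\Lambda_N$, and the no-cycle condition is equivalent to the existence of a filtration $\emptyset = M_0 \subset M_1 \subset \cdots \subset M_N = M$ by compact sets satisfying $\phi(t,M_i) \subset \mathrm{int}(M_i)$ for all $t>0$, with each $M_i\setminus M_{i-1}$ containing exactly one basic set $\Lambda_i$. I would then establish the localization statement: for every $\ep>0$ there is $d_1>0$ such that every periodic $d$-pseudotrajectory with $d\le d_1$ lies in the $\ep$-neighborhood of a single $\Lambda_i$. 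The argument uses that for $d$ small each unit-time step of a $d$-pseudotrajectory preserves the filtration levels $M_i$ (since $\phi(1,M_i)$ is strictly inside $\mathrm{int}(M_i)$ while the jump is only $d$); periodicity then traps $y$ in some minimal $M_i\setminus M_{i-1}$, and the asymptotic convergence of trapped forward orbits to $\Lambda_i$ (which survives $O(d)$ perturbations) forces the entire periodic $y$ into the $\ep$-neighborhood of $\Lambda_i$.

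Next I would invoke the classical Lipschitz shadowing property of flows in a neighborhood of a hyperbolic set (see \cite{Palmer1,Pilyugin}): there exist $d_0>0$ and $\mathcal{L}>0$, uniform over the finitely many basic sets, such that any $d$-pseudotrajectory with $d\le d_0$ contained in the neighborhood of $\Lambda_i$ is $\mathcal{L}d$-shadowed by a genuine trajectory $x(t)$ with a reparametrization $\alpha(t)$ satisfying \sref{eqdef2.1} and \sref{eqdef2.2}. To upgrade $x$ to a periodic orbit I would exploit the periodicity $y(t+\omega) = y(t)$: set $\bar x(s) = x(s+\alpha(\omega))$ and $\bar\alpha(t) = \alpha(t+\omega) - \alpha(\omega)$; the pair $(\bar x,\bar\alpha)$ shadows $y(\cdot+\omega) = y(\cdot)$ with the same constants, so uniqueness of the shadowing trajectory (up to a time shift) on a hyperbolic set forces $\bar x \equiv x$, and hence $x$ is periodic with period $\alpha(\omega)$. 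If $\Lambda_i$ reduces to a single hyperbolic rest point, then $x$ is simply that rest point, trivially periodic.

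The main obstacle is the localization step. Axiom A$'$ together with the no-cycle condition directly controls only genuine orbits, whereas a $d$-pseudotrajectory can jump upward in the filtration by $O(d)$ at integer times. The core work is to produce, for any prescribed $\ep$, an $\ep$-adapted thickening of the filtration that remains forward invariant under $d$-pseudotrajectories for all sufficiently small $d$, and then to argue that a periodic pseudotrajectory trapped in such a level must be eventually confined to the corresponding basic set's $\ep$-neighborhood. A secondary subtlety is the uniqueness-up-to-time-shift step: the two reparametrizations must be normalized consistently so that the time shift relating $\bar x$ and $x$ is exactly $\alpha(\omega)$, yielding the correct period.
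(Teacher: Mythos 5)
Your proposal is correct and follows essentially the same architecture as the paper: localize the periodic pseudotrajectory to a neighborhood of a single basic set using the no-cycle condition, apply the classical Lipschitz shadowing property near a hyperbolic set, and then close up the shadowing orbit using expansivity. The one real presentational difference is in the localization step: you encode the no-cycle condition directly via a Conley-type filtration $\emptyset = M_0 \subset \cdots \subset M_N = M$ with $\phi(t,M_i)\subset\mathrm{int}(M_i)$ and argue that a periodic pseudotrajectory is trapped in a single filtration gap, whereas the paper instead invokes as black boxes a bound on the time a pseudotrajectory can spend outside a neighborhood of $\Omega(X)$ (its Lemma \ref{lemOm3}) and a no-return lemma for nested neighborhoods $V_j\subset U_j$ of the basic sets (its Lemma \ref{lemOm4}), both quoted from \cite{PilTarSak}. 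These devices do the same work; your filtration argument is essentially how such no-return lemmas are proved, so it is more self-contained but would need the excursion-time bound of Lemma \ref{lemOm3} spelled out to conclude that a trapped periodic pseudotrajectory actually enters the shadowing neighborhood of $\Lambda_i$. Your ``uniqueness up to time shift'' step is exactly the expansivity of the flow on $W_j$ that the paper uses: it compares the shadowing orbits of $y$ and of $y(\cdot+\omega)=y$ and concludes $q=\phi(\tau,p)$ with $|\tau|<\delta$.

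One small clarification on the worry you raise in your final paragraph: you do \emph{not} need the time shift relating $\bar x$ and $x$ to be exactly $\alpha(\omega)$. Expansivity gives $\bar x(s)=x(s+\tau)$ for some $|\tau|<\delta$, hence $x$ is periodic with period $\alpha(\omega)-\tau$. Since the reparametrization estimate \sref{eqdef2.1} forces $\alpha(\omega)$ to be comparable to $\omega$, and since one may assume $\omega>\delta$ without loss of generality (the period of a periodic pseudotrajectory is not required to be minimal, so replace $\omega$ by an integer multiple), this period is automatically positive. That is all the definition of Lipschitz periodic shadowing requires, and it is exactly how the paper concludes.
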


Lemma \ref{lemLipPer1} is proved in Secs.
\ref{secLipPer1}-\ref{secLipPer5}; Lemma \ref{lemLipPer2} is proved
in Sec. \ref{secLipPer6}.

The proof of Lemma \ref{lemLipPer1} is divided into several steps.

We assume that $X$ has the Lipschitz periodic shadowing property and
establish the following statements.

\begin{enumerate}
\item Closed trajectories are uniformly hyperbolic.
\item Rest points are hyperbolic.
\item The chain-recurrent set coincides with the closure of the set of rest
points and closed trajectories; rest points are separated from the
remaining part of the chain-recurrent set.
\item The hyperbolic structure on the set of closed trajectories can be extended
to the chain-recurrent set.
\item The no-cycle condition holds.
\end{enumerate}

\subsection{Uniform hyperbolicity of closed trajectories}
\label{secLipPer1}

Without loss of generality we can assume that ${\cal L} > 1$.

Let $x(t)$ be a nontrivial closed trajectory of period $\omega$.
Choose $n_1,n \in \NN$ such that $\tau = n_1\omega/n \in [1/2, 1]$.
Let $x_k = x(k\tau)$, $f(x) = \phi(\tau, x)$, and $A_k = \D f(x_k)$.
Note that $A_{k+n} = A_k$. Below we prove a statement similar to
Lemma \ref{lemPalm2}.
\begin{lem}
\label{LP2} If $X \in \LipPerSh$, then for any $b>0$ there exists a
constant $K$ (the same for all closed trajectories $x(t)$ of $X$)
such that for any sequence $b_k \in T_{x_k} M$ with $|b_k| < b$
there exist sequences $s_k \in \RR$ and $v_k \in T_{x_k} M $ with
the following properties:
\begin{equation}
\label{2.1} v_{k+1} = A_k v_k + X(x_{k+1})s_{k+1} + b_{k+1}
\end{equation}
and
\begin{equation}
\label{2.2} |s_k|, |v_k| \leq K.
\end{equation}
\end{lem}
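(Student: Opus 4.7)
The plan is to adapt the proof of Lemma~\ref{lemPalm2} almost verbatim, replacing Lipschitz shadowing by Lipschitz periodic shadowing. The only substantive modification is that the approximating pseudotrajectory of $f$ must now be genuinely periodic, which forces us to close up the perturbation after its finite support window. The restriction $\tau\in[1/2,1]$ keeps the constant $\nu$ of \sref{new5} uniform, so every constant appearing below depends only on $X$, on the Lipschitz periodic shadowing constant $\mathcal{L}$, and on $b$; consequently the resulting $K$ is uniform over all closed trajectories, as required.

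Fix a large integer $N$ divisible by $n$ and define $\Delta_k\in T_{x_k}M$ on $\{-N,\ldots,N\}$ as in Lemma~\ref{lemPalm2}: $\Delta_{k+1}=A_k\Delta_k+b_{k+1}$ with $\Delta_{-N}=0$, so that $|\Delta_k|\le C=C(N,b,\sup_k\|A_k\|)$. Choose an integer $P$ divisible by $n$ with $P>2N$, and, reading indices modulo $P$, build a periodic sequence $y_k$ by $y_k=\exp_{x_k}(d\Delta_{k-P/2})$ for $|k-P/2|\le N$ and $y_k=x_k$ otherwise. The condition $\Delta_{-N}=0$ makes the entry into the window exact, within the window the analysis of Lemma~\ref{lemPalm2} gives $\dist(y_{k+1},f(y_k))\le d(2b+1)$, and at the single closing-up point at the exit the discrepancy is at most $2\nu|\Delta_N|d$. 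Interpolating by $\phi$ between consecutive $y_k$ as in Lemma~\ref{lemPalm1} yields a periodic pseudotrajectory $y(t)$ of $\phi$ of period $P\tau$, whose error is $O(d)$ once $d$ is chosen small enough for the fixed $N$.

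Applying Lipschitz periodic shadowing supplies a periodic trajectory $\hat x(t)$ and a reparametrization $\alpha(t)$ satisfying \sref{eqdef2.1}-\sref{eqdef2.2}. Set $\xi_k=\hat x(\alpha(k\tau))$, $t_k=\alpha((k+1)\tau)-\alpha(k\tau)$, and write $\xi_k=\exp_{x_k}(dc_k)$, $t_k=\tau+ds_k$. Introducing
$$G_k(t,v)=\exp_{x_{k+1}}^{-1}\bigl(\phi(\tau+t,\exp_{x_k}(v))\bigr),$$
which satisfies $G_k(0,0)=0$, $\partial_t G_k(0,0)=X(x_{k+1})$ and $\partial_v G_k(0,0)=A_k$, the identity $dc_{k+1}=G_k(ds_k,dc_k)$ expanded to first order in $d$ and divided by $d$ gives $c_{k+1}=A_kc_k+X(x_{k+1})s_k+o(1)$ as $d\to 0$. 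Letting $d=d_m\to 0$ along a subsequence yields limits $\tilde c_k$, $\tilde s_k$ on the window with $\tilde c_{k+1}=A_k\tilde c_k+X(x_{k+1})\tilde s_k$; setting $v_k^{(N)}=\Delta_k-\tilde c_k$ and $s_k^{(N)}=-\tilde s_{k-1}$ produces a solution of \sref{2.1} on $[-N,N-1]$, and a diagonal subsequence in $N\to\infty$ yields $v_k$, $s_k$ satisfying \sref{2.1} on all of $\ZZ$.

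The hard part, and the point where the periodic case genuinely differs from Lemma~\ref{lemPalm2}, is obtaining the uniform bound \sref{2.2}. In Lemma~\ref{lemPalm2} the pseudotrajectory error was uniformly $d(2b+1)$, which gave $|v_k^{(N)}|\le 2L(2b+1)$ independent of $N$; here the closing-up contribution $2\nu|\Delta_N|d$ may grow with $N$, and with it the naive bound on $|c_k-\Delta_k|$ obtained from \sref{new4}. Achieving the uniform $K=K(\mathcal{L},\nu,b)$ asserted in \sref{2.2} therefore requires a more careful construction of $y_k$ near the exit of the window --- for instance, spreading the return to the closed orbit over many consecutive periods of $x(t)$ so that the extra room made available to the reparametrization $\alpha$ absorbs $\Delta_N$ without inflating the rescaled deviation of $\hat x$ from $y$ --- after which the diagonal argument in $N$ produces the desired uniform bound.
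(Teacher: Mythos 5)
Your overall framework matches the paper's: replace the semi-infinite lift of Lemma~\ref{lemPalm2} by a periodic $\tau$-discrete pseudotrajectory, apply $\LipPerSh$, linearize via the maps $G_k$, and extract $v_k,s_k$ by a double limit $d\to 0$ then $N\to\infty$. You also correctly identify exactly where the argument of Lemma~\ref{lemPalm2} breaks: the periodic sequence $\Delta_k$ must close up, and the naive single closing-up jump has size $O(|\Delta_N|)$, which depends on $N$ and would destroy the uniform bound on the pseudotrajectory error (and hence on $K$). But that is precisely the step your proposal does not actually fill. The suggestion of ``spreading the return over many periods so the reparametrization absorbs $\Delta_N$'' cannot work as stated: the reparametrization $\alpha$ only rescales time and cannot absorb a geometric discrepancy between $y_{k+1}$ and $\phi(t_k,y_k)$; the expanding component of $\Delta_N$ grows under forward iteration rather than decaying; and the central component (eigenvalues of modulus one) does not decay under forward iteration at all, so no amount of waiting makes it small.

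What the paper does here, and what your proposal is missing, is the spectral decomposition of the monodromy operator $A=A_{n-1}\cdots A_0$ into $E^s_0\oplus E^c_0\oplus E^u_0$ together with the controllability statement of Lemma~\ref{LP2.5}. The unstable component is handled by solving \sref{4.3} backward from $\Delta^u_{ln}=0$ and using the result $a^u$ as the initial seed at $k=-ln$, so that the unstable part vanishes at the exit of the window; the stable component is allowed to decay under free iteration (finitely many extra periods) and is then killed by a single bounded kick; the central component is driven to zero in finitely many steps using increments of norm $\le b$ by Lemma~\ref{LP2.5} (this is possible exactly because the central eigenvalues have modulus one, so the powers of $A^c$ are polynomially bounded); and a final bounded kick re-seeds $a^u$ before evolving back to the start of the period. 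This produces a genuinely periodic $\Delta_k$ with $|c_k|=|\Delta_{k+1}-A_k\Delta_k|\le b$ for \emph{all} $k$, hence a $d(2b+1)$-pseudotrajectory and the uniform bound $K=L(2b+1)$. Without this decomposition-plus-controllability mechanism your proof has a genuine gap; the rest of your argument is fine and follows the paper.
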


Before we go to the proof of Lemma \ref{LP2}, we need to generalize
the notion of discrete Lipschitz shadowing property. Let $d,\tau >
0$; we say that a sequence $y_k$ is a $\tau$-discrete
$d$-pseudotrajectory if $\dist(y_{k+1}, \phi(\tau, y_k)) < d$.

Let $\ep>0$; we say that a sequence $x_k$ $\ep$-shadows $y_k$ if
there exists a sequence $t_k > 0$ such that
$$
\dist(x_k, y_k)< \ep, \quad |t_k - \tau| < \ep, \quad x_{k+1}
=\phi(t_k, x_k).
$$

The following lemma can be proved similarly to Lemma \ref{lemPalm1}.

\begin{lem} If $X \in \LipPerSh$, then there exist constants $d_0, L > 0$ such
that for any $\tau \in [1/2, 1]$ and $d>0$ and any periodic
$\tau$-discrete $d$-pseudotrajectory $y_k$ with $d\leq d_0$ there
exists a sequence $x_k$ (not necessarily periodic) that $Ld$-shadows
$y_k$.
\end{lem}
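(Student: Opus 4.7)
The plan is to adapt the proof of Lemma~\ref{lemPalm1} to the $\tau$-discrete, periodic setting: the unit-step interpolation is replaced by a $\tau$-step interpolation, and the general Lipschitz shadowing property is replaced by the periodic one, which requires the interpolant itself to be periodic.

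Given a periodic $\tau$-discrete $d$-pseudotrajectory $y_k$ of least discrete period $N$, I would set
$$
y(t)=\phi(t-k\tau,y_k),\qquad k\tau\le t<(k+1)\tau,\ k\in\ZZ,
$$
which, because $y_{k+N}=y_k$, is periodic of continuous period $N\tau$. I then verify that $y(\cdot)$ is a continuous $\nu' d$-pseudotrajectory for a constant $\nu'$ depending only on $\nu$ in (\ref{new5}). Fix $\sigma\in\RR$, $t\in[0,1]$, and pick $k$ with $k\tau\le\sigma<(k+1)\tau$. Since $\tau\in[1/2,1]$, the point $\sigma+t$ lies in $[j\tau,(j+1)\tau)$ for some $j\in\{k,k+1,k+2\}$, so the segment $[\sigma,\sigma+t]$ crosses at most two jump points. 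Inserting $\phi(\tau,\cdot)$ at each crossed jump and applying $\dist(y_{k+1},\phi(\tau,y_k))<d$ together with (\ref{new5}) (which is valid since $\tau\le 1$) yields, exactly as in the calculation of Lemma~\ref{lemPalm1},
$$
\dist(y(\sigma+t),\phi(t,y(\sigma)))\le (\nu+\nu^2)d=:\nu' d.
$$

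Provided $\nu' d\le d_0$, where $d_0$ is the constant from the definition of $\LipPerSh$, the periodic Lipschitz shadowing property then produces a periodic trajectory $x(t)$ and an increasing homeomorphism $\alpha$ satisfying the analogues of (\ref{eqdef2.1}) and (\ref{eqdef2.2}) with constant ${\cal L}\nu' d$. Setting $x_k=x(\alpha(k\tau))$ and $t_k=\alpha((k+1)\tau)-\alpha(k\tau)$, the flow identity gives $x_{k+1}=\phi(t_k,x_k)$, the shadowing inequality gives $\dist(x_k,y_k)\le {\cal L}\nu' d$, and, using $\tau\le 1$,
$$
|t_k-\tau|=\left|\frac{\alpha((k+1)\tau)-\alpha(k\tau)}{(k+1)\tau-k\tau}-1\right|\cdot\tau\le {\cal L}\nu' d.
$$
Taking $L={\cal L}\nu'$ and shrinking $d_0$ to $d_0/\nu'$ completes the proof.

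The one delicate point is the uniformity of $\nu'$ in $\tau\in[1/2,1]$: the lower bound $\tau\ge 1/2$ is used precisely to ensure that no unit-length segment crosses more than two jumps, bounding the accumulated Lipschitz factor by $\nu+\nu^2$ independently of $\tau$. Note also that $x_k$ need not be periodic even though $x(\cdot)$ is, since the sampling times $\alpha(k\tau)$ typically do not form an arithmetic progression commensurate with the flow-period of $x$; this is consistent with the ``not necessarily periodic'' qualifier in the statement.
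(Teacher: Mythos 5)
Your proposal is correct and is exactly the adaptation the paper has in mind: the paper simply asserts that the lemma ``can be proved similarly to Lemma~\ref{lemPalm1}'' without giving details, and your argument (interpolating the $\tau$-discrete pseudotrajectory, bounding the pseudotrajectory error by $(\nu+\nu^2)d$ because $\tau\ge 1/2$ forces at most two jump crossings per unit interval, applying $\LipPerSh$ to the periodic interpolant, and sampling at times $\alpha(k\tau)$) is precisely that adaptation carried out in full.
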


In the proof of Lemma \ref{LP2}, we use the following technical
statement which is well-known in control theory
\cite{Sontag},\cite{Heemels}.

\begin{lem}
\label{LP2.5} Let $B:\RR^m \to \RR^m$ be a linear operator such that
the absolute values of its eigenvalues equal 1. Then for any
$\Delta_0 \in \RR^m$ and $\delta > 0$ there exists a number $R \in
\NN$ and a sequence $\delta_k \in \RR^m$, \;$k \in [1, R]$, such
that $|\delta_k| < \delta $ and the sequence $\Delta_k \in \RR^m$
defined by
\begin{equation}
\label{AC1.0.4} \Delta_{k+1} = B\Delta_k + \delta_{k+1}, \quad k \in
[0, R-1],
\end{equation}
satisfies $\Delta_R = 0$.
\end{lem}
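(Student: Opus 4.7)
The plan is to recast the conclusion as a discrete reachability statement and handle it by a Hahn--Banach/duality argument. Unrolling the recurrence \sref{AC1.0.4} yields $\Delta_R=B^R\Delta_0+\sum_{k=1}^R B^{R-k}\delta_k$, so the condition $\Delta_R=0$ is equivalent to
$$-\Delta_0\in\mathcal{S}_R(\delta):=\sum_{k=1}^R B^{-k}U_\delta,$$
the Minkowski sum of the images of the open ball $U_\delta=\{u\in\RR^m:|u|<\delta\}$ under the invertible operators $B^{-k}$ (invertibility comes from $0\notin\mathrm{spec}(B)$). It therefore suffices to prove that $\mathcal{S}_\infty(\delta):=\bigcup_{R\ge 1}\mathcal{S}_R(\delta)$ equals $\RR^m$ for every $\delta>0$; applying the conclusion with $\delta/2$ then produces the strict inequality $|\delta_k|<\delta$ requested in the statement.

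Each $\mathcal{S}_R(\delta)$ is a Minkowski sum of open ellipsoids, hence an open convex symmetric neighborhood of the origin, and the same holds for the increasing union $\mathcal{S}_\infty(\delta)$. If $\mathcal{S}_\infty(\delta)\ne\RR^m$, the geometric Hahn--Banach theorem furnishes a nonzero $\xi\in\RR^m$ with $\sup_{y\in\mathcal{S}_\infty(\delta)}\langle\xi,y\rangle<\infty$, and a direct computation of the support function gives
$$\sup_{y\in\mathcal{S}_R(\delta)}\langle\xi,y\rangle=\delta\sum_{k=1}^R|(B^T)^{-k}\xi|,$$
so finiteness forces $\sum_{k=1}^\infty|(B^T)^{-k}\xi|<\infty$; in particular $(B^T)^{-k}\xi\to 0$.

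The contradiction comes from the spectral hypothesis on $B$: I claim no nonzero vector has its $(B^T)^{-1}$-orbit tending to zero. Since all eigenvalues of $B$ lie on the unit circle, the same holds for $B^T$ and for $(B^T)^{-1}$. Decomposing $\xi$ along the generalized eigenspaces of $(B^T)^{-1}$ and writing the restriction on each piece as $\mu I+N$ with $|\mu|=1$ and $N$ nilpotent, the expansion $(\mu I+N)^k=\sum_{j}\binom{k}{j}\mu^{k-j}N^j$ shows that $|(\mu I+N)^k\xi_\mu|$ either stays constant (when $\xi_\mu$ is a true eigenvector) or grows polynomially in $k$, so it never tends to $0$ unless $\xi_\mu=0$; since distinct generalized eigenspaces form a direct sum this transfers to $\xi$ itself, forcing $\xi=0$ and contradicting the choice of $\xi$. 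I expect this Jordan-block verification to be the main technical point, but it is a routine computation once the problem has been reduced to this form; complex eigenvalues cause no extra difficulty, as one may either work in $\CC^m$ or use the real Jordan form with $2\times 2$ rotation blocks.
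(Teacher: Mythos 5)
Your proof is correct. The paper itself does not prove Lemma~\ref{LP2.5} at all: it is simply cited as a known fact from control theory (the references \cite{Sontag} and \cite{Heemels} on bounded-input null controllability of discrete linear systems), so there is no ``paper proof'' to compare against line by line. That said, the duality argument you give is essentially the standard way this controllability result is established, and it is sound as written: unrolling the recurrence reduces the claim to showing that the Minkowski sums $\mathcal{S}_R(\delta)=\sum_{k=1}^R B^{-k}U_\delta$ exhaust $\RR^m$; each $\mathcal{S}_R(\delta)$ is open, convex, symmetric, and they increase in $R$, so if the union were a proper subset, a separating hyperplane would give a nonzero $\xi$ with $\sum_{k\ge 1}\delta\,|(B^T)^{-k}\xi|<\infty$, forcing $(B^T)^{-k}\xi\to 0$; and the Jordan-block computation correctly shows that no nonzero vector has its $(B^T)^{-1}$-orbit tending to zero when all eigenvalues lie on the unit circle. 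Two very small remarks: (i) the passage to $\delta/2$ at the end is unnecessary, since $U_\delta$ is already the \emph{open} ball, so membership in $\mathcal{S}_R(\delta)$ already yields controls with $|\delta_k|<\delta$ strictly; (ii) when you pass from $(B^T)^{-k}\xi\to 0$ to the vanishing of each generalized-eigenspace component, it is worth saying explicitly that the spectral projections are bounded linear maps on the finite-dimensional space, so convergence to zero is inherited componentwise --- you gesture at this with ``direct sum,'' which is fine. Also note your argument only uses that $B$ has no eigenvalue of modulus strictly greater than $1$, so it actually proves the (well-known) slightly stronger statement; this does no harm. Overall this is a clean, self-contained replacement for the paper's citation.
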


\begin{proof}[Proof of Lemma \ref{LP2}]
Fix an arbitrary sequence $b_k$ with $|b_k| < b$ and a number $l \in
\NN$.

First we will find a number $l_1 > l$ and sequences $c_k$ and
$\Delta_k$ defined for $k \in [-ln, l_1n]$ such that $|c_k| < b$ and
$$
c_k = b_k, \quad k \in [-ln, ln],$$
\begin{equation}
\label{3.0.5} \Delta_{k+1} = A_k \Delta_k + c_{k+1}, \quad k \in
[-ln, l_1 n - 1],
\end{equation}
$$
\Delta_{-ln} = \Delta_{l_1n}.
$$
Consider the operator $A: T_{x_0} \to T_{x_0}$ defined by $A =
A_{n-1}\cdots A_0$.

The tangent space $T_{x_0}$ can be represented in the form
\begin{equation}
\label{3.1} T_{x_0} = E^s_0 \oplus E^c_0 \oplus E^u_0
\end{equation}
so that the subspace $E^s_0$ corresponds to the eigenvalues
$\lambda_j$ of $A$ such that $|\lambda_j|<1$, the subspace $E^c_0$
corresponds to the eigenvalues $\lambda_j$ such that
$|\lambda_j|=1$, and the subspace $E^u_0$ corresponds to the
eigenvalues $\lambda_j$ such that $|\lambda_j|>1$.

For any index $k$ consider the decomposition $T_{x_k} = E^s_k \oplus
E^c_k \oplus E^u_k$ as the image of decomposition \sref{3.1} under
the mapping $A_{k-1}\cdots A_0$.

In the coordinates corresponding to these decompositions, the
matrices $A_k$ can be represented in the following form:
$$
A_k = \diag(A^s_k, A^c_k, A^u_k).
$$
Set $A_\sigma=A^\sigma_{n-1}\cdots A^\sigma_{0}$ for $\sigma=s,c,u$.
Consider the corresponding coordinate representations $b_k = (b_k^s,
b_k^c, b_k^u)$, $c_k = (c_k^s, c_k^c, c_k^u)$, and $\Delta_k =
(\Delta_k^s, \Delta_k^c, \Delta_k^u)$ (and note that the values
$|b_k^s|, |b_k^c|, |b_k^u|$ are not necessarily less than $b$).

Equations \sref{3.0.5} are equivalent to the system
\begin{equation}
\label{4.2} \Delta^s_{k+1} = A_k^s \Delta^s_{k} + c_{k+1}^s,
\end{equation}
\begin{equation}
\notag\Delta^c_{k+1} = A_k^c \Delta^c_{k} + c_{k+1}^c,
\end{equation}
\begin{equation}
\label{4.3} \Delta^u_{k+1} = A_k^u \Delta^u_{k} + c_{k+1}^u.
\end{equation}

Set $c_k = b_k$ for $k \in [-ln, ln - 1]$.

Consider the sequence satisfying \sref{4.2} with  initial data
$\Delta^s_{-ln} = 0$ and denote $\Delta^s_{ln}$ by $a^s$; Consider
the sequence satisfying \sref{4.3} with  initial data $\Delta^u_{ln}
= 0$ and denote $\Delta^u_{-ln}$ by $a^u$.

There exist numbers $l_s, l_u > 1$ such that
$$
|A^{-l} a^u| < b, \; l \geq l_u, \quad |A^{l} a^s| < b, \; l \geq
l_s.
$$

Set $\Delta_{-ln} = (0, 0, a^u)$; then the definition of $a^s$ and
$a^u$ implies that $\Delta_{ln} = (a^s, C_1, 0)$ for some $C_1 \in
E^c_0$.

Set $c_k = 0$ for $k \in [ln+1, (l+l_s)n]$; then $\Delta_{(l+l_s)n}
= (A_s^{l_s}a^s, C_2, 0)$ for some $C_2 \in E^c_0$.

Set $c_k = 0$ for $k \in [(l+l_s)n+1, (l+l_s+1)n-1]$ and $c_k
=(-A_s^{l_s+1}a^s, 0, 0)$ for $k = (l+l_s+1)n$. Then
$\Delta_{(l+l_s+1)n} = (0, C_3, 0)$ for some $C_3 \in E^c_0$.

Applying Lemma~\ref{LP2.5} to $A^c:E^c_0\to E^c_0$, we find a number
$R$ and a sequence $\delta_k$ with $|\delta_k|\leq b$ such that if
$$
x_{i+1}=A^c x_i+\delta_{i+1},\quad x_0=\Delta^c_{(l+l_s+1)n},
$$
then $x_R=0$. Then if we set for $i=0,\dots,R-1$, $c_k=0$ for
$(l+l_s+i+1)n+1\leq k\leq (l+l_s+i+2)n-1$ and
$c_{(l+l_s+i+2)n}=(0,\delta_{i+1},0)$, we see that
$$
\Delta^c_{(l+l_s+i+2)n}=A^c\Delta^c_{(l+l_s+i+1)n}+\delta_{i+1},
\quad i=0,\dots,R-1,
$$
so that $\Delta^c_{(l+l_s+R+1)n}=0$; of course, the other two
components of $\Delta_{(l+l_s+R+1)n}$ remain zero.

Set $c_k = 0$ for $k \in [(l+l_s+R+1)n +1, (l+l_s+R+2)n -1]$ and
$c_k = (0, 0, A_u^{-l_u}a^u)$ for $k = (l+l_s+R+2)n$; then
$\Delta_{(l+l_s+R+2)n} = (0, 0, A_u^{-l_u}a^u)$. Finally, we set
$c_k =0$ for $k \in [(l+l_s+R+2)n +1, (l+l_s+R+2+l_u)n]$ and see
that $\Delta_{(l+l_s+2+R+l_u)n} = (0, 0, a^u) = \Delta_{-ln}$. Thus,
we have constructed the sequences mentioned in the beginning of the
proof.

Taking $d$ small enough, considering the periodic $\tau$-discrete
pseudotrajectory $y_k = \exp_{x_k}(d\Delta_k)$, and repeating the
reasoning similar to that in the proof of Lemma \ref{lemPalm2}, we
can prove that relations \sref{2.1} and \sref{2.2} hold with
$K=L(2b+1)$ for $k \in [-ln, ln-1]$.

After that, we repeat the reasoning used in the last two paragraphs
of the proof of Lemma \ref{lemPalm2} to complete the proof of Lemma
\ref{LP2}.
\end{proof}

As in Sec. \ref{secPalm5}, we define ${\cal M}_k, V_k, P_k$, and
$B_k = P_{k+1}A_k: V_k \to V_{k+1}$. Note that $B_k^{-1} =
P_kA_{k}^{-1}$. Since $M$ is compact, there exists a constant $N>0$
such that $\| \D \phi(\tau, x) \| < N$ for any $\tau \in [-1, 1]$
and $x \in M$. Hence, $\|A_k\|, \|A_k^{-1}\| < N$, and
\begin{equation}
\label{8.1} \|B_k\|, \|B_k^{-1}\| < N.
\end{equation}

The same reasoning as in the proof of Lemma~6 establishes the
following statement.

\begin{lem}
\label{LP8} There exists a constant $K > 0$ (the same for all closed
trajectories $x(t)$) such that for every sequence $b_k \in V_k$ with
$|b_k| \leq 1$ there exists a solution $v_k \in V_k$ of the system
$$
v_{k+1} = B_k v_k + b_{k+1}
$$
such that
$$
\|v_k\| \leq K.$$
\end{lem}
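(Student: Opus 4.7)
The plan is to deduce Lemma \ref{LP8} from Lemma \ref{LP2} by the same projection argument that derived Lemma \ref{lemPalm6} from Lemma \ref{lemPalm2}. Given a sequence $b_k \in V_k$ with $|b_k| \leq 1$, regard it as a bounded sequence in ${\cal M}_k = T_{x_k}M$ with $\|b\|_\infty \leq 1$ and apply Lemma \ref{LP2}. This produces scalars $s_k \in \RR$ and vectors $w_k \in {\cal M}_k$, with $|s_k|,|w_k| \leq K$ for a constant $K$ that depends only on $X$ (not on the particular closed trajectory $x(t)$), satisfying
$$
w_{k+1} = A_k w_k + X(x_{k+1}) s_{k+1} + b_{k+1}.
$$

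Next I would use the flow-invariance relation $A_k X(x_k) = X(x_{k+1})$ to kill the tangential term. Because $f = \phi(\tau,\cdot)$ is a time-$\tau$ map of the flow, we have $Df(x_k) X(x_k) = X(\phi(\tau,x_k)) = X(x_{k+1})$. Since $V_k$ is the orthogonal complement of the line spanned by $X(x_k)$ and $P_k$ is the orthogonal projection onto $V_k$, we have $(\Id - P_k)u \in \{X(x_k)\}$ for any $u \in {\cal M}_k$, and therefore $A_k(\Id - P_k) u \in \{X(x_{k+1})\} = \ker P_{k+1}$. Consequently
$$
P_{k+1} A_k = P_{k+1} A_k P_k = B_k P_k.
$$

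Now apply $P_{k+1}$ to the equation for $w_{k+1}$. Since $P_{k+1} X(x_{k+1}) = 0$ and $b_{k+1} \in V_{k+1}$ gives $P_{k+1} b_{k+1} = b_{k+1}$, setting $v_k = P_k w_k \in V_k$ yields
$$
v_{k+1} = B_k v_k + b_{k+1}, \quad k \in \ZZ.
$$
Because $P_k$ is an orthogonal projection, $|v_k| \leq |w_k| \leq K$, which gives the desired uniform bound.

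There is no substantive obstacle beyond bookkeeping: the uniformity of $K$ across closed trajectories is already built into the statement of Lemma \ref{LP2}, and the rest is the same projection trick as in Lemma \ref{lemPalm6}. The one point worth emphasizing is that the derivation of $A_k X(x_k) = X(x_{k+1})$ survives for $\tau \in [1/2, 1]$ (not just $\tau = 1$), so that $P_{k+1} A_k = B_k P_k$ continues to hold and the argument goes through verbatim.
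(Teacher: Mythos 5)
Your argument is correct and is precisely the projection reduction the paper has in mind: the paper's proof of Lemma \ref{LP8} is the single line ``the same reasoning as in the proof of Lemma~\ref{lemPalm6} establishes the following statement,'' and you have spelled out that reasoning, correctly replacing Lemma \ref{lemPalm2} by Lemma \ref{LP2} as the source of the bounded solution $w_k$ and noting that $A_kX(x_k)=X(x_{k+1})$ holds for the time-$\tau$ map with any $\tau$. Your observation that the uniformity of $K$ over closed trajectories is already contained in the statement of Lemma \ref{LP2} is exactly the point the paper relies on.
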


A remark on page 26 of \cite{Coppel}, Lemma \ref{LP8} and the
inequalities \sref{8.1} imply that there exist constants $C_1 > 0$
and $\lam_1 \in (0, 1)$ (the same for all closed trajectories) and a
representation $V_k = E^s(x_k) \oplus E^u(x_k)$ such that
$$
B_k E^s(x_k) = E^s(x_{k+1}), \quad B_k E^u(x_k) = E^u(x_{k+1}),
$$
$$
|B_{l+k}\cdots B_k v^s| \leq C_1 \lam_1^l|v_s|, \quad v^s\in
E^s(x_k), \; l>0, k \in \ZZ
$$
$$
|B^{-1}_{-l+k}\cdots B_k^{-1} v^u| \leq C_1\lam_1^l|v_u|, \quad v^u
\in E^u(x_k), \; l>0, k \in \ZZ.
$$

\begin{rem}
In fact in \cite{Coppel} exponential dichotomy with uniform
constants was proved only on $\ZZ^+$. However we can extend the
corresponding inequalities to the whole of $\ZZ$ by the periodicity
of $B_k$.
\end{rem}

Since $\tau \in [1/2, 1]$ and $\| \D \phi(\tau, x)\| \leq N$ the
above conditions imply that there exist constants $C_2
> 0$ and $\lam_2\in (0, 1)$ such that if $x(t)$ is a closed
trajectory, then
\begin{equation}
\label{8.5.1} |P_{\phi(t, x_0)} \D \phi(t, x(t_0)) v^s| \leq C_2
\lam_2^t |v^s|, \quad v^s \in E^s(x(t_0)), \; t>0, \; t_0 \in \RR,
\end{equation}
\begin{equation}
\label{8.5.2} |P_{\phi(-t, x_0)} \D \phi(-t, x(t_0)) v^u| \leq C_2
\lam_2^t |v^u|, \quad v^u \in E^u(x(t_0)), \; t>0, \; t_0 \in \RR,
\end{equation}
where $P_{y\in M}$ is the orthogonal projection of $T_yM$ with
kernel $X(y)$,  $E^{s, u}(x(t_0)) = P_{\phi(t_0, x)} \D \phi(t_0, x)
E^{s, u}(x_0)$.

\begin{rem}\label{LPR} In particular, the above
inequalities  imply that $x(t)$ is a hyperbolic closed trajectory.
\end{rem}

\subsection{Hyperbolicity of the rest points}

Let $x_0$ be a rest point. As in subsection \ref{secPalm3} (using
Lemma \ref{LP2}),
 we conclude that $\D \phi(1, x_0)$ is hyperbolic; hence, $x_0$ is a hyperbolic rest point.

\subsection{The rest points are separated from the remaining part of the
chain-recurrent set}

Denote by $\Per(X)$ the set of rest points and points belonging to
closed trajectories of a vector field $X$; let $\CR(X)$ be the set
of its chain-recurrent points. For a set $A \subset M$ denote by
$\Cl A$ the closure of $A$ and by $B(a,A)$ its $a$-neighborhood.

\begin{lem}
\label{LP7} If $X \in \LipPerSh$, then $\Cl \Per(X) = \CR(X)$.
\end{lem}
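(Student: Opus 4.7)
The inclusion $\Cl \Per(X) \subset \CR(X)$ is immediate, since every rest point and every point on a closed trajectory is chain-recurrent and $\CR(X)$ is closed. The substance of the lemma is the reverse inclusion, which is established via the Lipschitz periodic shadowing hypothesis.

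The plan is the following. Fix $p \in \CR(X)$ and $d \in (0, d_0]$. Choose $\theta > 0$ (for instance $\theta = 1$) and $\ep_0 > 0$ to be specified; using chain recurrence of $p$ together with continuity of the flow quantified by the constant $\nu$ from \sref{new5}, find points $p = y_0, y_1, \ldots, y_N, y_{N+1} = p$ and times $T_0, T_1, \ldots, T_N \geq \theta$ such that
\[
\dist(\phi(T_i, y_i), y_{i+1}) < \ep_0, \qquad i = 0, 1, \ldots, N.
\]
Set $\omega = T_0 + T_1 + \cdots + T_N$ and define $y:\RR \to M$ on $[0,\omega)$ by
\[
y(t) = \phi\bigl(t - (T_0 + \cdots + T_{i-1}),\, y_i\bigr), \qquad T_0 + \cdots + T_{i-1} \leq t < T_0 + \cdots + T_i,
\]
(with the convention that the empty sum equals $0$), and extend to $\RR$ by $y(t + \omega) = y(t)$.

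Next I verify that $y$ is a periodic $d$-pseudotrajectory. Fix $\tau \in \RR$ and $0 \leq t \leq 1$. By periodicity we may assume $\tau \in [0, \omega)$. If $\tau$ and $\tau + t$ lie in the same ``segment'' $[T_0 + \cdots + T_{i-1},\, T_0 + \cdots + T_i)$, then $\dist(\phi(t, y(\tau)), y(\tau+t)) = 0$. If they lie in consecutive segments (with a jump at some endpoint $T_0 + \cdots + T_i$), the estimate \sref{new5} together with the inequality $\dist(\phi(T_i, y_i), y_{i+1}) < \ep_0$ yields, essentially as in the pseudotrajectory verification on p.~\pageref{lemPalm5} of the proof of Lemma~\ref{lemPalm5}, a bound of the form $\nu \ep_0$. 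Provided $\theta \geq 1$ at most one jump is crossed on any interval $[\tau, \tau+t]$, so choosing $\ep_0 = d/\nu$ gives $\dist(\phi(t, y(\tau)), y(\tau+t)) \leq d$ uniformly, i.e.\ $y$ is a periodic $d$-pseudotrajectory.

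Applying the Lipschitz periodic shadowing property, there exist a periodic trajectory $x(t)$ of $X$ (closed orbit or rest point) and a reparametrization $\alpha$ such that $\dist(y(t), x(\alpha(t))) \leq \mathcal L d$ for all $t$. At $t = 0$ this gives $\dist(p, x(0)) \leq \mathcal L d$, and $x(0) \in \Per(X)$. Letting $d \to 0$ produces a sequence of periodic points of $X$ converging to $p$, so $p \in \Cl \Per(X)$.

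The only step requiring care is the construction of a genuinely periodic pseudotrajectory: one needs every transition (including the ``wrap-around'' from segment $N$ back to $y_0 = p$) to have error at most $d$, and one must impose $T_i \geq 1$ so that a unit-length window hits at most one jump. Both issues are handled by demanding $T_i \geq 1$ in the chain-recurrence condition and $\ep_0 \leq d/\nu$, after which the verification is mechanical and parallels the argument already carried out in the proof of Lemma~\ref{lemPalm5}.
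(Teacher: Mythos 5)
Your proposal is correct and follows the same route as the paper: construct a periodic $d$-pseudotrajectory through the chain-recurrent point $p$, apply $\LipPerSh$ to obtain a periodic orbit within $\mathcal{L}d$ of $p$, and let $d\to 0$. The paper simply asserts the existence of the periodic $d$-pseudotrajectory with $g(0)=y_0$ as an immediate consequence of chain recurrence, whereas you spell out that routine construction (with $T_i\ge 1$ and $\ep_0\le d/\nu$); the substance is identical.
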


\begin{proof} If $y_0 \in \CR(X)$, then for any $d>0$
there exists a periodic $d$-pseudotrajectory $g(t)$ such that $g(0)
=y_0$.

Since $X\in\LipPerSh$, there exists a point $x_d \in \Per(X)$ such
that $\dist(x_d, y_0) < \LL d$. Hence, $B(\LL d, y_0) \cap \Per(X)
\ne \emptyset$ for arbitrary $d > 0$, which proves our lemma.
\end{proof}

\begin{lem}Let $X \in \LipPerSh$ and let $p$ be a rest point of $X$.
Then $p \notin \Cl(\CR(X)\setminus p)$.
\end{lem}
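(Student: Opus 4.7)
The plan is to argue by contradiction: suppose $p\in\Cl(\CR(X)\setminus\{p\})$. By Lemma~\ref{LP7}, $\Cl\Per(X)=\CR(X)$, so there exist $z_n\in\Per(X)\setminus\{p\}$ with $z_n\to p$. Since $p$ is a hyperbolic rest point, $Df(p)=\expp(DX(p))$ is hyperbolic, $DX(p)$ is invertible, and $p$ is an isolated zero of $X$; hence for all large $n$, $z_n$ lies on a nontrivial closed trajectory $\gamma_n$ of period $T_n>0$.

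I would then fix a large parameter $T_0>0$ and build the periodic pseudotrajectory
$$
g_n(t)=\begin{cases} p, & t\in[0,T_0],\\ \phi(t-T_0,z_n), & t\in[T_0,T_0+T_n],\end{cases}
$$
extended with period $T_0+T_n$ to all of $\RR$. The pseudotrajectory estimate is trivial inside each smooth piece; at the two junctions it reduces, via \sref{new5} and $\phi(T_n,z_n)=z_n$, to $d_n\le\nu\dist(z_n,p)\to 0$. Applying $\LipPerSh$ yields a closed trajectory $x_n$ of period $\omega_n\approx T_0+T_n$ and a reparametrization $\alpha_n$ with $\dist(x_n(\alpha_n(t)),g_n(t))\le\mathcal{L}d_n$. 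Since Hartman--Grobman at the hyperbolic rest point $p$ forces the only invariant set of $X$ in a sufficiently small ball $B(\delta_0,p)$ to be $\{p\}$ itself, the closed orbit $\gamma_n$ cannot be contained in $B(\delta_0,p)$, so $g_n$ attains distance at least $\delta_0$ from $p$ (uniformly in $n$). Hence $x_n\neq p$ is a genuine nontrivial closed trajectory.

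To extract the contradiction I would invoke the uniform hyperbolicity of closed trajectories from Section~\ref{secLipPer1}: along the near-$p$ arc of $x_n$ (of parameter length $\approx T_0$, contained in $B(\mathcal{L}d_n,p)$) the splittings $V_{x_n(t)}=E^s(x_n(t))\oplus E^u(x_n(t))$ satisfy \sref{8.5.1}--\sref{8.5.2} with constants $C_2,\lam_2$ independent of $n$. Passing on a subsequence to Grassmannian limits of $E^s(x_n(\cdot))$, $E^u(x_n(\cdot))$, and to a limit $e\in T_pM$ of the unit flow directions $X(x_n(\cdot))/|X(x_n(\cdot))|$, one obtains subspaces $E^s_\infty, E^u_\infty\subseteq T_pM$; the rate estimates at $p$ (where $P_p=\Id$ because $X(p)=0$) give $E^s_\infty\subseteq E^s_p$ and $E^u_\infty\subseteq E^u_p$, while the identity $Df(z_n)X(z_n)=X(f(z_n))$ forces $e$ to be an eigenvector of $Df(p)$. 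The uniform angle bound inherent in uniform hyperbolicity guarantees that $E^s_\infty\oplus E^u_\infty\oplus\langle e\rangle=T_pM$, so $\dim E^s_\infty+\dim E^u_\infty=m-1<m=\dim E^s_p+\dim E^u_p$, and exactly one of the inclusions $E^{s,u}_\infty\subseteq E^{s,u}_p$ is strict. Placing the eigenvector $e$ of $Df(p)$ into the missing $Df(p)$-invariant direction of the appropriate $E^{s,u}_p$ then contradicts the hyperbolic splitting $T_pM=E^s_p\oplus E^u_p$ of the rest point. The hardest step is making this final dimension-and-eigenvector analysis rigorous, i.e.\ controlling all three Grassmannian limits simultaneously under the uniform angle estimate and correctly identifying which eigenvalue and eigenspace of $Df(p)$ the limit direction $e$ carries.
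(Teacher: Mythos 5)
Your overall strategy (construct a periodic pseudotrajectory that glues a block at $p$ to a nearby closed trajectory, invoke $\LipPerSh$, and use the uniform hyperbolicity estimates \sref{8.5.1}--\sref{8.5.2} plus a dimension count to force a contradiction) is indeed the paper's strategy, but the place where you try to extract the contradiction is wrong, and that is not a cosmetic detail.

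The paper keeps the nearby periodic point $x_0$ \emph{fixed} and sends the dwell time $T$ at $p$ to infinity. This is done precisely so that the shadowing closed trajectories accumulate on points $q\in W^s_{loc}(p)\setminus\{p\}$ and $r\in W^u_{loc}(p)\setminus\{p\}$ that lie at a distance in $[2\ep,3\ep]$ from $p$, i.e.\ a \emph{uniform positive} distance from $p$. At such a $q$ the vector field does not vanish, $X(q)\neq 0$, and the contradiction is clean: the Grassmannian limit $F^s=\lim E^s_n$ satisfies $\dim F^s\geq\sigma=\dim W^s(p)$, $F^s\perp X(q)$, and $F^s\subset T_qW^s(p)$, while $X(q)\in T_qW^s(p)$ too; since $\dim T_qW^s(p)=\sigma$, the direct sum $F^s\oplus\langle X(q)\rangle$ cannot fit. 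Your construction instead fixes $T_0$ and sends $z_n\to p$, and you then take all Grassmannian limits along an arc of $x_n$ contained in $B(\mathcal{L}d_n,p)$, which \emph{shrinks to the single point} $p$. At $p$ we have $X(p)=0$, so there is no vector $X(q)$ to add, and the argument that closes the paper's proof is simply unavailable to you.

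You try to substitute for $X(q)$ the limit $e$ of the normalized flow directions, but this has two problems. First, the claimed ``identity $Df(z_n)X(z_n)=X(f(z_n))$ forces $e$ to be an eigenvector of $Df(p)$'' does not follow: normalizing gives $Df(z_n)\bigl(X(z_n)/|X(z_n)|\bigr)=\lambda_n\bigl(X(f(z_n))/|X(f(z_n))|\bigr)$ with $\lambda_n=|X(f(z_n))|/|X(z_n)|$, and the two normalized directions are taken at \emph{different} points of the trajectory, so their limits $e$ and $e'$ need not agree; you only get $Df(p)e=\lambda e'$, not $Df(p)e=\lambda e$. Along a collapsing arc near a hyperbolic rest point the flow direction rotates from near-stable to near-unstable, so no single eigenvector is singled out. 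Second, even granting that $e$ is an eigenvector, your final step does not produce a contradiction: WLOG $E^s_\infty=E^s_p$ and $E^u_\infty\subsetneq E^u_p$ of codimension one; then $e\perp E^s_p$ and $e\perp E^u_\infty$, and it is perfectly consistent that $e$ is an expanding eigenvector lying in $E^u_p\setminus E^u_\infty$, in which case $E^s_\infty\oplus E^u_\infty\oplus\langle e\rangle=T_pM$ holds with no violation of hyperbolicity. The contradiction in the paper comes from the fact that at $q\neq p$ the extra direction $X(q)$ is \emph{forced} to lie inside the $\sigma$-dimensional $T_qW^s(p)$; at $p$ there is no such forced extra direction. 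To repair your proof you would need to let the dwell time grow (so the shadowing trajectories get close to the local stable/unstable manifolds at a uniform positive distance from $p$) and then carry out the dimension count at such a point $q$, which is exactly what the paper does.
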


\begin{proof}
It has already been proved that all rest points of a vector field $X
\in \LipPerSh$ are hyperbolic; hence the set of rest points is
finite. Assume that $p \in \Cl(\CR(X)\setminus p)$. Then Lemma
\ref{LP7} implies that $p \in \Cl(\Per(X)\setminus p)$.

Denote by $W^s_{loc,a}(p)$ and $W^u_{loc,a}(p)$ the local stable and
unstable manifolds of size $a$.

Since the rest point $p$ is hyperbolic, there exists $\ep \in (0,
1/2)$ such that if $x \in M$ and $\phi(t,x)\subset B(4\ep,
p),\;t\geq 0$, then $x \in W^s_{loc,4\ep}(p)$; if $\phi(t,x)\subset
B(4\ep, p),\;t\leq 0$, then $x \in W^u_{loc,4\ep}(p)$; and if
$\phi(t,x)\subset B(4\ep, p),\;t\in\RR$, then $x=p$.

Let $d_1 = \min(d_0, \ep/\LL)$, where $d_0$ and $\LL$ are the
constants from the definition of $\LipPerSh$. Take a point $x_0 \in
\Per(X)$ (let the period of the trajectory of $x_0$ equal $\omega$)
and a number $T>0$ and define the mapping
$$
g_{x_0, T}(t) =
\begin{cases} p, & \quad t \in [-T, T],\\
\phi(t - T, x_0), & \quad t \in (T, T+\omega),\\
\end{cases}
$$
for $t \in [-T, T+\omega)$. Continue this mapping periodically to
the line $\RR$.

There exists $d_2 < d_1$ depending only on $d_1$ and $\nu$ (see
\sref{new5}) such that if $x_0 \in B(d_2, p)$, then $g_{x_0, T}(t)$
is a $d_1$-pseudotrajectory for any $T > 0$. We fix such a point
$x_0\in B(d_2,p)$ and consider below pseudotrajectories $g_{x_0,T}$
with this fixed $x_0$ and with increasing numbers $T$.

By our assumptions, the pseudotrajectory $g_{x_0, T}$ can be
$\ep$-shadowed by the trajectory of a point $z_T \in \Per(X)$ with
reparametrization $\al_T(t)$:
\begin{equation}
\label{11.1} \dist(g_{x_0, T}(t), \phi(\al_T(t), z_T)) < \ep.
\end{equation}

Our choice of $\ep$ implies that there exist times $t_1, t_2 > 0$
such that
$$
\dist(p, \phi(t_1, x_0)) \in [2\ep, 3\ep], \quad \phi(t, x_0) \in
B(4\ep, p), \quad t \in [0, t_1],
$$
$$
\dist(p, \phi(-t_2, x_0)) \in [2\ep, 3\ep], \quad \phi(t, x_0) \in
B(4\ep, p), \quad t \in [-t_2, 0].
$$
We emphasize that the numbers $t_1,t_2$ depend on our choice of the
point $x_0$ but not on our choice of $T$. Let
$$
r_T = \phi(\al_T(T+t_1), z_T), \quad q_T = \phi(\al_T(-T-t_2), z_T).
$$
Inequalities \sref{11.1} and the following two relations imply that
\begin{equation}
\label{11.2.1} \phi(t, q_T) \in B(5\ep, p), \quad t \in [0,
-\al_T(-T-t_2)],
\end{equation}
\begin{equation}
\label{11.2.2} \phi(t, r_T) \in B(5\ep, p), \quad t \in [ -
\al_T(T+t_1), 0].
\end{equation}

Since $\LL d_2\leq \ep <1/2$ and $t_1,t_2$ are fixed, inequality
\sref{eqdef2.1} implies that if $T$ is large enough, then
\begin{equation}
\label{11.3}  - \al_T(-T-t_2) \geq T/2, \quad \al_T(T+t_1)  \geq
T/2.
\end{equation}
Since \sref{11.2.1}-\sref{11.3} imply that $\dist(\phi(t, q_T), p)
\leq 4\ep$ for $0 \leq t \leq T/2$ and $\dist(\phi(t, r_T), p) \leq
4 \ep$ for $0 \geq t \geq -T/2$ it follows that
$$
\dist(q_T, W^s_{loc, 4\ep}(p)), \dist(r_T, W^u_{loc, 4\ep}(p)) \to
0, \quad T \to+\infty.
$$
Since $q_T, r_T \in B(4\ep, p) \setminus B(\ep, p)$, we can choose
sequences $q_n = q_{T_n} \to q$ and $r_n = r_{T_n} \to r$ such that
$q, r\ne p$, $q \in W^s_{loc,4\ep}(p)$, and $r \in
W^u_{loc,4\ep}(p)$.

Denote by $O(q_n)$ the (closed) trajectory of the point $q_n$.

From Remark \ref{LPR} we know that $O(q_n)$ is a hyperbolic closed
trajectory.

Passing to a subsequence, if necessary, we may assume that the
values $\dim W^s(O(q_n))$ are the same for all $n$. Since
$$
\dim W^s(O(q_n)) + \dim W^u(O(q_n))= \dim M +1
$$
and
$$
\dim W^s(p) + \dim W^u(p) = \dim M,
$$
we see that at least one of the following inequalities holds:
\begin{equation}\notag
 \dim W^s(O(q_n)) > \dim W^s(p)
\end{equation}
or
$$
\dim W^u(O(q_n)) > \dim W^u(p).
$$
Without loss of generality, we can assume that the first inequality
holds (in the other case we note that $O(q_n) = O(r_n)$ and consider
the vector field $-X$).

Denote $\sigma = \dim W^s(p)$. Consider the space $E^s_n = E^s(q_n)$
corresponding to inequalities \sref{8.5.1}, \sref{8.5.2}. Then the
following holds
$$
\dim E^s_n =\dim W^s(O(q_n)) - 1 \geq \sigma.
$$
Passing to a subsequence, if necessary, we may assume that $E^s_n
\to F^s \subset V_q$, where $V_q$ is the subspace in $T_q M$
orthogonal to $X(q)$ (here and below, we consider convergence of
linear spaces in the Grassman topology). Passing to the limit in
inequalities \sref{8.5.1}, we conclude that
\begin{equation}\notag
 |P_{\phi(t, q)} \D\phi(t, q) v^s| \leq C_2 \lam_2^t
|v^s|,
 \quad v^s\in F^s, \; t > 0.
\end{equation}

This inequality implies the inclusion $F^s \subset T W_q^s(p)$.
Hence,
$$
F^s \oplus \langle X(q) \rangle \subset T_q W^s(q),
$$
and $\dim W^s(q) \geq \sigma+1$. We get a contradiction which proves
Lemma \ref{LP8}.
\end{proof}

\subsection{Hyperbolicity of the chain-recurrent set}

Consider a point $y \in \CR(X)$ that is not a rest point. Lemma
\ref{LP7} implies that there exists a sequence $x_n \in \Per(X)$
such that $x_n \to y$.

Consider the decomposition $V_{x_n} = E^s(x_n) +E^u(x_n)$
corresponding to inequalities \sref{8.5.1}, \sref{8.5.2}. Denote
$E^{s, u}_n = E^{s, u}(x_n)$. Passing if necessary to a subsequence,
we may assume that the dimensions $\dim E^s_n$ and $\dim E^u_n$ are
the same for all $n$. Since $y$ is not a rest point, $V_{x_n} \to
V_y$.

Since inequalities \sref{8.5.1} and \sref{8.5.2} hold for all closed
trajectories with the same constants $C_2$ and $\lam_2$, standard
reasoning implies that the ``angles'' between $E^s_n$ and $E^u_n$
are uniformly separated from 0 (see, for instance,
\cite{PilSSBook}). So passing if necessary to a subsequence, we may
assume that $E^s_n \to E^s$ and $E^u_n \to E^u$.

Hence, $E^s \cap E^u = \{0\}$, $\dim (E^s + E^u) = \dim E^s + \dim
E^u = \dim V_y$, and $E^s + E^u = V_y$. Estimates \sref{8.5.1} and
\sref{8.5.2} for the points $x_n$ imply similar estimates for $y$.
Hence, the skew product flow \sref{Text11.1} is hyperbolic, and
Theorem 3 in Sacker and Sell \cite{SS2} implies that $\CR(X)$ is
hyperbolic.

\subsection{No-cycle condition}
\label{secLipPer5}

In the previous two subsections we have proved that the vector field
$X$ (and its flow $\phi$) satisfies Axiom A$'$. It is known that in
this case, the nonwandering set of $X$ can be represented as a
disjoint union of a finite number of compact invariant sets (called
basic sets):
\begin{equation}
\label{spe} \Omega(X)=\Omega_1\cup\dots\cup\Omega_m,
\end{equation}
where each of the sets $\Omega_i$ is either a hyperbolic rest point
of $X$ or a hyperbolic set on which $X$ does not vanish and which
contains a dense positive semi-trajectory.

The basic sets $\Omega_i$ have stable and unstable ``manifolds'':
$$
W^s(\Omega_i)=\{x\in M:\;\mbox{dist}(\phi(t, x),\Omega_i)\to 0,
\quad t\to\infty\}
$$
and
$$W^u(\Omega_i)=\{x\in M:\;\mbox{dist}(\phi(t, x),\Omega_i)\to 0,
\quad t\to-\infty\}.
$$
If $\Omega_i$ and $\Omega_j$ are basic sets, we write
$\Omega_i\to\Omega_j$ if the intersection
$$
W^u(\Omega_i)\cap W^s(\Omega_j)
$$
contains a wandering point.

We say that $X$ has a 1-cycle if there is a basic set $\Omega_i$
such that $\Omega_i\to\Omega_i$.

We say that $X$ has a $k$-cycle if there are $k>1$ basic sets
$$
\Omega_{i_1},\dots,\Omega_{i_k}
$$
such that
$$
\Omega_{i_1}\to\dots\to\Omega_{i_k}\to\Omega_{i_1}.
$$

\begin{lem}
If $X\in\LipPerSh$, then $X$ has no cycles.
\end{lem}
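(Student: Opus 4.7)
The plan is to argue by contradiction. Suppose $X$ has a cycle
$\Omega_{i_1}\to\Omega_{i_2}\to\cdots\to\Omega_{i_k}\to\Omega_{i_{k+1}}=\Omega_{i_1}$.
For each $j$ I can pick a \emph{wandering} point
$y_j\in W^u(\Omega_{i_j})\cap W^s(\Omega_{i_{j+1}})$ coming from the definition of the cycle.
Fix one of them, say $y_1$. Since $y_1$ is wandering, $y_1\notin\Omega(X)$; as $\Omega(X)$ is compact, the quantity
$\eta=\dist(y_1,\Omega(X))>0$ is well defined. The heart of the argument is then
to produce, for every sufficiently small $d>0$, a \emph{periodic} $d$-pseudotrajectory $g$ that literally passes through $y_1$; Lipschitz periodic shadowing will then provide a closed true trajectory whose image lies in $\Omega(X)$ yet comes within $\LL d$ of $y_1$, contradicting the choice of $\eta$ as soon as $\LL d<\eta$.

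The construction of $g$ is the main step and the main obstacle. Given any $\delta>0$, choose $T_j$ so large that
$\phi(T_j,y_j)$ lies in the $\delta$-neighborhood of $\Omega_{i_{j+1}}$ and $\phi(-T_j,y_j)$ lies in the $\delta$-neighborhood of $\Omega_{i_j}$; this is possible because of the asymptotic behaviour of heteroclinic orbits. I then splice together, in cyclic order, the orbit arcs $\{\phi(t,y_j):\, -T_j\le t\le T_j\}$, gluing consecutive arcs inside each basic set $\Omega_{i_j}$ as follows.
\begin{itemize}
\item If $\Omega_{i_j}$ is a hyperbolic rest point $p_j$, the endpoints of the two arcs to be glued are both within $\delta$ of $p_j$; I close the gap by inserting a constant segment equal to $p_j$, producing jumps of size at most $\delta$ at the two splicing times.
\item If $\Omega_{i_j}$ is a non-trivial basic set, it contains a dense positive semitrajectory. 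Hence I can find an orbit segment in $\Omega_{i_j}$ that starts $\delta$-close to $\phi(T_{j-1},y_{j-1})$ and, after sufficient time, passes $\delta$-close to $\phi(-T_j,y_j)$; inserting this segment introduces two splicing jumps of size at most $\delta$.
\end{itemize}
The resulting function is periodic by construction, coincides with a genuine orbit on each spliced piece, and has finitely many jumps each of size $O(\delta)$. A standard estimate using the modulus of continuity $\nu$ of the flow on $[0,1]$ (see \eqref{new5}) shows that for $\delta$ small enough (essentially $\delta \le d/(3\nu)$) the pseudotrajectory inequality $\dist(g(\tau+t),\phi(t,g(\tau)))\le d$ holds for all $\tau$ and $0\le t\le 1$. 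At one of the splicing times I arrange $g(t_0)=y_1$ exactly.

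To finish, pick any $d\in(0,d_0)$ with $\LL d<\eta$ and apply the Lipschitz periodic shadowing hypothesis to $g$: there exist a closed trajectory $x(t)$ of $X$ and a reparametrization $\alpha$ satisfying
$\dist(g(t),x(\alpha(t)))\le\LL d$ for every $t$. In particular
$\dist(y_1,x(\alpha(t_0)))\le\LL d<\eta$. But $x(\mathbb{R})$ is a compact $\phi$-invariant set consisting entirely of periodic points, hence lies in $\Omega(X)$. Therefore $\dist(y_1,\Omega(X))\le\LL d<\eta$, contradicting the definition of $\eta$. So no cycle exists.

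The only non-trivial step is the construction above, and within it the non-trivial substep is the splicing inside a non-trivial basic set; here one must use that every basic set of a vector field satisfying Axiom~A$'$ admits a dense positive semitrajectory, a fact that is already invoked in \eqref{spe} of the present paper. Everything else (hyperbolicity of rest points, $\Omega(X)=\CR(X)=\Cl\Per(X)$, control of the splicing error by $\nu$) is available from the previous subsections.
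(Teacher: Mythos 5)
Your argument is correct and follows essentially the paper's strategy: both proofs splice together heteroclinic orbit arcs, with gluing segments inside each basic set supplied by density of positive semi-trajectories, to build a periodic $d$-pseudotrajectory passing exactly through a wandering point of the cycle, then invoke Lipschitz periodic shadowing to produce a periodic orbit within $\LL d$ of that point and hence a contradiction with its positive distance from $\Omega(X)$. The paper writes out only the $1$-cycle case and remarks that the general case differs only in notation, so your version simply supplies the omitted book-keeping for $k$-cycles and for the (degenerate) case that one of the $\Omega_{i_j}$ is a rest point.
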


\begin{proof} To simplify the presentation, we prove that $X$ has no 1-cycles
(in the general case, the idea is essentially the same, but the
notation is heavy).

To get a contradiction, assume that
$$
p\in(W^u(\Omega_i)\cap W^s(\Omega_i))\setminus \Omega(X).
$$
Then there are sequences of times $j_m,k_m\to\infty$ as $m\to\infty$
such that
$$
\phi(-j_m, p), \phi(k_m, p)\to\Omega_i,\quad m\to\infty.
$$
Since the set $\Omega_i$ is compact, we may assume that
$$
\phi(-j_m, p)\to q\in\Omega_i\;\mbox{ and }\; \phi(k_m, p)\to
r\in\Omega_i.
$$
Since $\Omega_i$ contains a dense positive semi-trajectory, there
exist points $s_m\to r$ and times $l_m>0$ such that
$\phi(l_m,s_m)\to q$ as $m\to\infty$.

Clearly, if we continue the mapping
$$
g(t) = \begin{cases} \phi(t, p), & t \in [0, k_m],\\
\phi(t - k_m, s_m), & t \in [k_m, k_m + l_m],\\
\phi(t-j_m-k_m-l_m, p), & t \in [k_m+l_m, k_m + l_m +j_m],
\end{cases}
$$
periodically with period $k_m+l_m+j_m$, we get a periodic
$d_m$-pseudotrajectory of $X$ with $d_m\to 0$ as $m\to\infty$.

Since $X\in\LipPerSh$, there exist points $p_m \in \Per(X)$ (for $m$
large enough) such that $p_m\to p$ as $m\to\infty$, and we get the
desired contradiction with the assumption that $p\notin\Omega(X)$.
The lemma is proved.
\end{proof}

\subsection{$\Omega$-stability implies Lipschitz periodic
shadowing}\label{secLipPer6}

The proof of Lemma \ref{lemLipPer2} is similar to the corresponding
proof in \cite{OPT}, where the case of diffeomorphisms is
considered. In the present article we give the most important steps
and leave the details to the reader.

\begin{proof}[Proof of Lemma \ref{lemLipPer2}]

Let us formulate several auxiliary definitions and statements.

Let us say that a vector field $X$ has the Lipschitz shadowing
property on a set $U$ if there exist positive constants $\LL,d_0$
such that if $g(t)$ with $\{g(t):\;t\in\RR\}\subset U$ is a
$d$-pseudotrajectory (in our standard sense:
$$
\mbox{dist}(g(\tau+t),\phi(t,g(\tau)))<d,\quad \tau\in\RR,t\in[0,1])
$$
with $d\leq d_0$, then there exists a point $p\in U$ and a
reparametrization $\al$ satisfying inequality \sref{eqdef2.1} such
that
\begin{equation}
\label{os1} \mbox{dist}(g(t),\phi(\al(t),p))<\LL d,\quad t\in\RR.
\end{equation}

We say that a vector field $X$ is expansive on a set $U$ if there
exist positive numbers $a$ (expansivity constant) and $\delta$ such
that if two trajectories $\{\phi(t,p):\;t\in\RR\}$ and
$\{\phi(t,q):\;t\in\RR\}$ belong to $U$ and there exists a
continuous real-valued function $\alpha(t)$ such that
$$
\mbox{dist}(\phi(\alpha(t),q),\phi(t,p))\leq a,\quad t\in\RR,
$$
then $p=\phi(\tau,q)$ for some real $\tau \in (-\delta, \delta)$.

Let $X$ be an $\Omega$-stable vector field. Consider the
decomposition \sref{spe} of $\Omega(X)$. We will refer to the
following well-known statement \cite{Palmer1}.

\begin{thm}\label{thm4} If $\Omega_i$ is a basic set, then there exists a
neighborhood $U$ of $\Omega_i$ such that $X$ has the Lipschitz
shadowing property on $U$ and is expansive on $U$.
\end{thm}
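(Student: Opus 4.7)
The plan is to split on the two types of basic set allowed by the statement: either $\Omega_i$ is a hyperbolic rest point, or $\Omega_i$ is a compact hyperbolic invariant set on which $X$ does not vanish and which contains a dense positive semi-trajectory. The two cases call for somewhat different techniques, but in both one reduces matters to the already-classical discrete shadowing theory on hyperbolic sets for a diffeomorphism.

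For a hyperbolic rest point $p$, choose $U = B(r_0, p)$ with $r_0$ small enough that $f = \phi(1, \cdot)$ restricted to $U$ is $C^1$-close to its linearization $A = D\phi(1, p)$, which is hyperbolic on $T_p M$. In such a neighborhood the classical Anosov--Bowen argument (see \cite{Anosov, Bowen}) yields Lipschitz shadowing for the discrete system generated by $f$ in the form of Definition \ref{defDLSh}; a construction analogous to the proof of Lemma \ref{lemPalm1} (run in reverse) converts a continuous $d$-pseudotrajectory in $U$ into a discrete pseudotrajectory, then reassembles the shadowing discrete orbit with its times $t_k$ into a true trajectory with a piecewise-linear reparametrization $\alpha$, and this reparametrization automatically satisfies \sref{eqdef2.1}. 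Expansivity on $U$ is immediate from the fact that near a hyperbolic fixed point any two nearby points that stay together for all time under any reparametrization must be time-shifts of each other.

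For the non-trivial case, the strategy is to use a tubular neighborhood $U$ of $\Omega_i$ in which $|X|$ is bounded below and the hyperbolic splitting on $\Omega_i$ extends to a continuous invariant splitting on a slightly larger flow-invariant set. Given a $d$-pseudotrajectory $g(t)$ in $U$, I would sample it at integer times $y_k = g(k)$; as in Lemma \ref{lemPalm1} this produces a discrete pseudotrajectory for $f(x) = \phi(1, x)$, for which the classical Lipschitz shadowing theorem for uniformly hyperbolic sets (applied through the orthogonal-complement fibres $V_k$ exactly as in Lemma \ref{lemPalm6}) yields a sequence $x_k$ and times $t_k$ with $|t_k - 1| \leq L d$ and $\dist(x_k, y_k) \leq L d$ satisfying $x_{k+1} = \phi(t_k, x_k)$. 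Then $x(t) = \phi(t - k, x_k)$ on $[k, k+1]$ (with the flow advanced by $t_k$ rather than by $1$ when crossing $k+1$) together with the piecewise-linear $\alpha$ that sends each integer $k$ to $\sum_{j < k} t_j$ give the required shadowing trajectory and reparametrization. Expansivity on $U$ follows from the local product structure of $\Omega_i$ combined with the fact that $X$ is bounded away from zero on $U$.

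The hardest step is the estimate $|\alpha(t_2)/(t_2 - t_1) - 1| \leq \mathcal{L} d$ uniformly for all $t_1 \neq t_2$, not merely along the integer grid. This requires controlling how the small per-step timing errors $|t_k - 1|$ aggregate over long intervals and showing that the piecewise-linear interpolation preserves the Lipschitz constant up to a multiplicative factor, while simultaneously ensuring that the tangential drift introduced by the reparametrization is absorbed by the component along $X(p_k)$ that was factored out in passing to the quotient bundles $V_k$. Handling this uniformly in all trajectories $x(t)$ in $\Omega_i$, using the uniform hyperbolicity constants $C_1, \lambda_1$ supplied by the hyperbolic structure on $\Omega_i$, is exactly the content that makes the shadowing constant $\mathcal{L}$ uniform across $U$.
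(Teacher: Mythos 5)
The paper gives no proof of Theorem~\ref{thm4}: it is stated as ``the following well-known statement'' with a citation to Palmer's monograph \cite{Palmer1} (see also \cite{Pilyugin}). So there is no in-paper argument for you to match; the question is only whether your sketch of the classical proof is sound. Your overall strategy --- split into the rest-point and the non-trivial basic-set cases, discretize via $f=\phi(1,\cdot)$, invoke the Anosov--Bowen Lipschitz shadowing theorem for hyperbolic sets (in the normal bundle $V_k$ for the flow case), and then reassemble the discrete shadowing orbit and times $t_k$ into a flow trajectory with a piecewise-linear reparametrization --- is indeed the standard route and is what the cited references do in substance.

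Two points deserve correction, though. First, the final paragraph misidentifies the ``hardest step'': for a piecewise-linear $\al$ whose slopes $t_k$ all lie in $[1-Ld,\,1+Ld]$, the difference quotient $(\al(t_2)-\al(t_1))/(t_2-t_1)$ over \emph{any} interval is a convex combination of the $t_k$ and hence also lies in $[1-Ld,\,1+Ld]$; there is no accumulation of per-step errors to control, and no interplay with the tangential drift is needed at that point. The estimate \sref{eqdef2.1} is automatic once the per-step bound on $|t_k-1|$ is in hand. Second, and more substantively, you cite ``Lemma \ref{lemPalm6}'' as the source of discrete Lipschitz shadowing in the normal bundle, but that lemma is a \emph{consequence} of Lipschitz shadowing (it appears in the converse half of this paper's argument) and cannot be used here without circularity; what you actually need is the classical graph-transform/contraction proof of discrete Lipschitz shadowing near a hyperbolic set, together with the standard reduction that passes from the full tangent bundle to the quotient by $\langle X\rangle$ and reinserts the flow direction through the time parameters $t_k$. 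You also leave unexamined the issue that a neighborhood $U$ of $\Omega_i$ is not flow-invariant, so one must shadow pseudotrajectories contained in a smaller neighborhood by trajectories that stay in a larger one; this is handled in \cite{Palmer1,Pilyugin} via an isolating neighborhood of the basic set, and your sketch should at least flag it. With these repairs the argument is correct but remains, as it stands, a pointer to the classical proof rather than a self-contained one.
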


We also need the following two lemmas. Analogs of these lemmas were
proved for diffeomorphisms in \cite{PilTarSak}; the proofs for flows
are the same.

\begin{lem}\label{lemOm3} For any
neighborhood $U$ of the nonwandering set $\Omega(X)$ there exist
positive numbers $B,d_1$ such that if $g(t)$ is a
$d$-pseudotrajectory of $\phi$ with $d\leq d_1$ and
$$
g(t)\notin U,\quad t\in[\tau,\tau+l],
$$
for some $l>0$ and $\tau\in\RR$, then $l\leq B$.
\end{lem}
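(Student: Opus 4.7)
The argument is by contradiction via a standard compactness recipe. Suppose the lemma fails; then there exist sequences $d_m\to 0$ and $l_m\to\infty$, numbers $\tau_m\in\RR$, and $d_m$-pseudotrajectories $g_m:\RR\to M$ such that $g_m(t)\notin U$ for every $t\in[\tau_m,\tau_m+l_m]$. Shifting time (the pseudotrajectory property is translation invariant), I may assume $g_m(t)\notin U$ for $t\in[-l_m/2,l_m/2]$. The complement $M\setminus U$ is compact, so along a subsequence $x_m:=g_m(0)$ converges to some $x\in M\setminus U$.

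The heart of the plan is to show that, for every fixed $T>0$, $g_m(t)\to\phi(t,x)$ uniformly on $[-T,T]$. Forward in time, chaining the defining inequality on consecutive unit subintervals and iterating the Lipschitz bound~\sref{new5} gives $\dist(g_m(t),\phi(t,x_m))\le C_T d_m$ for $t\in[0,T]$, where $C_T$ grows like a power of $\nu$ but is independent of $m$. Backward in time, I compare $g_m(-s)$ with $\phi(-s,x_m)$ by applying the forward estimate to the segment from $g_m(-s)$ to $g_m(0)$ and pushing back by $\phi(-s,\cdot)$, whose Lipschitz constant on a fixed window is again controlled by~\sref{new5}. Combining these estimates with $x_m\to x$ and continuity of the flow in its initial condition yields the desired uniform convergence on $[-T,T]$.

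Once $m$ is large enough that $l_m/2\ge T$, the pseudotrajectory $g_m(t)$ lies in the closed set $M\setminus U$ for all $t\in[-T,T]$, so uniform convergence forces $\phi(t,x)\in M\setminus U$ on $[-T,T]$. Since $T$ was arbitrary, the entire orbit $\{\phi(t,x):t\in\RR\}$ avoids $U$. This yields the contradiction: the $\omega$-limit set $\omega(x)$ is nonempty by compactness of $M$, lies in $M\setminus U$ because the orbit eventually does, and yet every $\omega$-limit point is nonwandering, so $\omega(x)\subset\Omega(X)\subset U$.

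The main obstacle is the backward-time approximation in the second step: the pseudotrajectory inequality is stated only forward, so the estimate on $[-T,0]$ has to be deduced from the forward estimate via inversion, absorbing an additional $\nu^T$-type factor into the constant $C_T$. The point that makes it harmless is that $T$ is fixed before $m\to\infty$, so $C_T d_m\to 0$ regardless of how $C_T$ depends on $\nu$. Beyond that bookkeeping, the argument is the standard compactness plus $\omega$-limit observation, identical in form to the diffeomorphism analogue in~\cite{PilTarSak}.
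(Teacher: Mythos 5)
Your proposal is correct, and it is exactly the standard compactness-plus-$\omega$-limit argument that the paper has in mind: the paper itself gives no proof here but cites the diffeomorphism analogue in~\cite{PilTarSak} with the remark that the flow case is the same, and what you have written is precisely that argument transported to flows (chaining the pseudotrajectory estimate over unit intervals forward, inverting for backward time, passing to a limit orbit that misses $U$, and contradicting $\emptyset\neq\omega(x)\subset\Omega(X)\subset U$). The only cosmetic caveat is that~\sref{new5} as stated covers only $0\le t\le 1$, so for the backward estimate you should invoke the (equally standard, by compactness of $M$ and $C^1$-smoothness of $X$) Lipschitz bound for $\phi(t,\cdot)$ with $-1\le t\le 0$ and then iterate; you flag exactly this point, and since $T$ is fixed before $m\to\infty$, the resulting constant is harmless.
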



\begin{lem}\label{lemOm4}  Assume that the vector field $X$ is
$\Omega$-stable. Let $U_1,\dots,U_m$ be disjoint neighborhoods of
the basic sets $\Omega_1,\dots,\Omega_m$. There exist neighborhoods
$V_j\subset U_j$ of the sets $\Omega_j$ and a number $d_2>0$ such
that if $g(t)$ is a $d$-pseudotrajectory of $X$ with $d\leq d_2$,
$g(\tau)\in V_j$ and $g(\tau+t_0)\notin U_j$ for some
$j\in\{1,\dots,m\}$, some $\tau \in \RR$ and some $t_0>0$, then
$g(\tau+t)\notin V_j$ for $t\geq t_0$.
\end{lem}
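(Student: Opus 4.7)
The plan is to argue by contradiction, manufacturing a cycle among the basic sets $\Omega_i$ and thereby violating the no-cycle condition that accompanies $\Omega$-stability. Suppose the conclusion fails for some basic set $\Omega_j$. Then for every neighborhood $V\subset U_j$ of $\Omega_j$ and every $d_2>0$ one can find a $d$-pseudotrajectory satisfying the hypotheses but not the conclusion. Applying this with a shrinking sequence of neighborhoods $V_j^{(n)}\downarrow\Omega_j$ and with $d_n\downarrow 0$, I obtain $d_n$-pseudotrajectories $g_n$ and times $0<s_n\le T_n$ with $g_n(0)\in V_j^{(n)}$, $g_n(s_n)\notin U_j$, and $g_n(T_n)\in V_j^{(n)}$.

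The first step is to rule out short escapes and returns. If along a subsequence $s_n$ stayed bounded, then by continuous dependence on initial data, $g_n|_{[0,s_n]}$ converges uniformly to a true trajectory of $X$ issuing from a point $p\in\Omega_j$. Invariance of $\Omega_j$ and openness of $U_j$ then force $g_n(s_n)\in U_j$ for large $n$, a contradiction. The symmetric argument, running backward from $g_n(T_n)\in V_j^{(n)}$, shows $T_n-s_n\to\infty$ as well.

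Passing to a subsequence, $g_n(s_n)\to\tilde p\in M\setminus U_j$. Since $X$ is $\Omega$-stable, $\alpha(\tilde p)\subset\Omega_k$ and $\omega(\tilde p)\subset\Omega_l$ for some basic sets $\Omega_k,\Omega_l$. Using Lemma \ref{lemOm3} to bound the time $g_n$ spends outside $\bigcup_i U_i$, I would decompose the segments $g_n|_{[0,s_n]}$ and $g_n|_{[s_n,T_n]}$ into finitely many sub-pieces each contained in a single $U_{i_r}$, then extract subsequential limits at the transitions. The local Lipschitz shadowing supplied by Theorem \ref{thm4} on each $U_{i_r}$ promotes the limits of consecutive pieces to genuine wandering points linking $\Omega_{i_r}$ to $\Omega_{i_{r+1}}$. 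Concatenating the resulting chain from $\Omega_j$ to $\Omega_k=\alpha(\tilde p)$, the orbit of $\tilde p$ itself which realizes $\Omega_k\to\Omega_l$ (in the degenerate case $k=l$ or $\tilde p$ lies in a basic set one argues directly), and the chain from $\Omega_l=\omega(\tilde p)$ back to $\Omega_j$, I obtain a cycle $\Omega_j\to\cdots\to\Omega_j$; because $\tilde p\notin U_j$ at least one index in this cycle differs from $j$, contradicting the no-cycle condition.

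The main obstacle is precisely this cycle extraction: because both $s_n$ and $T_n-s_n$ diverge, no single Hausdorff limit of $g_n$ is available, and one must simultaneously extract subsequential limits at all the entry and exit points of $g_n$ in the various $U_i$. I plan to handle this by a diagonal extraction combined with the expansivity and Lipschitz shadowing on each basic-set neighborhood given by Theorem \ref{thm4}, in parallel with the diffeomorphism argument of \cite{PilTarSak}.
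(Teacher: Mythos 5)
Your strategy---argue by contradiction and manufacture a cycle of basic sets---is a legitimate alternative to what the paper actually does, which is to invoke the diffeomorphism analog from \cite{PilTarSak}; there the argument rests on the filtration $\emptyset = M_0\subset M_1\subset\cdots\subset M_m=M$ adapted to the spectral decomposition, whose existence is exactly what the no-cycle condition buys. With a filtration in hand one simply chooses $V_i$ inside a thin shell of $M_i\setminus M_{i-1}$ and takes $d_2$ so small that a $d_2$-pseudotrajectory respects the strict monotonicity $\phi_t(M_i)\subset\operatorname{int}M_i$; the conclusion is then a two-line continuity argument. Your route is morally re-deriving the dynamical content of that filtration from scratch. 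That can be made to work, but it is substantially more labor, and the crucial step of your sketch is left at the level of a plan rather than a proof.

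Concretely, Steps 1--3 (ruling out bounded escape/return times, so $s_n\to\infty$ and $T_n-s_n\to\infty$, and extracting $\tilde p\notin U_j$) are fine. The gap is the cycle extraction. First, the decomposition of $g_n|_{[0,T_n]}$ into ``finitely many sub-pieces each contained in a single $U_{i_r}$'' is not into a number of pieces bounded uniformly in $n$: before the lemma is proved there is nothing preventing $g_n$ from revisiting the same $U_i$ many times, so you need to say explicitly that since there are only $m$ basic sets some pair of consecutive indices must recur, and a (simple) cycle can be extracted from the resulting chain. Second, and more seriously, passing to a subsequential limit of $g_n$ on a bounded window around a transition time produces a true orbit segment whose endpoints merely lie \emph{near} $\Omega_{i_r}$ and $\Omega_{i_{r+1}}$; this does not by itself give a wandering point of $W^u(\Omega_{i_r})\cap W^s(\Omega_{i_{r+1}})$. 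You must run a diagonal extraction in which the time spent in $U_{i_r}$ before, and in $U_{i_{r+1}}$ after, the transition both tend to $+\infty$, and then use the Lipschitz shadowing and expansivity of Theorem \ref{thm4} to upgrade ``limit orbit stays near $\Omega_{i_r}$ for all negative time'' to ``limit orbit lies on $W^u(\Omega_{i_r})$'' (and symmetrically for $W^s(\Omega_{i_{r+1}})$). Your text names these tools but does not carry out the argument. Finally, the role you assign to $\tilde p$, $\Omega_k=\alpha(\tilde p)$ and $\Omega_l=\omega(\tilde p)$ is not coherent as written: $g_n$ tracks the orbit of $\tilde p$ only on bounded windows around $s_n$, so the ``chain from $\Omega_j$ to $\Omega_k$'' and ``from $\Omega_l$ to $\Omega_j$'' must come from the intermediate-time behavior of $g_n$, not from the orbit of $\tilde p$ itself; the write-up conflates these. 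Until these points are resolved, the proposal is a plausible outline rather than a proof, and the filtration argument the paper cites remains the shorter path.
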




Now we pass to the proof itself.

Apply Theorem \ref{thm4} and Lemmas \ref{lemOm3}, \ref{lemOm4} and
find disjoint neighborhoods $W_1,\dots,W_m$ of the basic sets
$\Omega_1,\dots,\Omega_m$ such that
\begin{itemize}
\item[(i)]  $X$ has the Lipschitz shadowing property on each $W_j$ with
the same constants $\LL,d^*_0$;

\item[(ii)] $X$ is expansive on each $W_j$ with the same expansivity
constants $a$, $\delta$.
\end{itemize}

Find neighborhoods $V_j,U_j$ of $\Omega_j$ (and reduce $d^*_0$, if
necessary) so that the following properties are fulfilled:
\begin{itemize}
\item $V_j\subset U_j\subset W_j,\quad j=1,\dots,m$;

\item the statement of Lemma \ref{lemOm4} holds for $V_j$ and $U_j$ with some
$d_2>0$;

\item the $\LL d^*_0$-neighborhoods of $U_j$ belong to $W_j$.
\end{itemize}

Apply Lemma \ref{lemOm3} to find the corresponding constants $B,d_1$
for the neighborhood $V_1\cup\dots\cup V_m$ of $\Omega(X)$.

We claim that $X$ has the Lipschitz periodic shadowing property with
constants $\LL,d_0$, where
$$
d_0=\min\left(d^*_0,d_1,d_2,\frac{a}{2\LL}\right).
$$

Take a $\mu$-periodic $d$-pseudotrajectory $g(t)$ of $X$ with $d\leq
d_0$. Without loss of generality we can assume that $\mu > \delta$
(since $\mu$ is not necessarily the minimal period). Lemma
\ref{lemOm3} implies that there exists a neighborhood $V_j$ such
that the pseudotrajectory $g(t)$ intersects $V_j$; shifting time, we
may assume that $g(0)\in V_j$.

In this case, $\{g(t):\;t\in\RR\}\subset U_j$. Indeed, if
$g(t_0)\notin U_j$ for some $t_0$, then $g(t_0+k\mu)\notin U_j$ for
all $k$. It follows from Lemma \ref{lemOm4} that if $t_0+k\mu>0$,
then $g(t)\notin V_j$ for $t\geq t_0+k\mu$, and we get a
contradiction with the periodicity of $g(t)$ and the inclusion
$g(0)\in V_j$.

Thus, there exists a point $p$ such that inequalities (\ref{os1})
hold for some reparametrization $\al$ satisfying inequality
\sref{eqdef2.1}. Let us show that either $p$ is a rest point or the
trajectory of $p$ is closed. By the choice of $U_j$ and $W_j$,
$\phi(t,p)\in W_j$ for all $t\in\RR$. Let $q=\phi(\mu,p)$.

Inequalities (\ref{os1}) and the periodicity of $g(t)$ imply that
$$
\mbox{dist}(g(t),\phi(\al(t+\mu)-\mu,q))=
$$
$$
\mbox{dist}(g(t+\mu),\phi(\al(t+\mu),p))\leq \LL d,\quad t\in\RR.
$$
Thus,
$$
\mbox{dist}(\phi(\al(t),p),\phi(\al(t+\mu)-\mu,q))\leq 2\LL d\leq
a,\quad t\in\RR,
$$
which implies that
$$
\mbox{dist}(\phi(\theta,p),\phi(\be(\theta),q))\leq 2\LL d\leq
a,\quad \theta\in\RR,
$$
where $\be(\theta)=\al(\al^{-1}(\theta)+\mu)-\mu$.

Since $\phi(t,p)\in W_j$ for all $t\in\RR$, our expansivity
condition on $W_j$ implies that $q=\phi(\tau,p)$ for some $\tau \in
(-\delta, \delta)$.

This completes the proof.
\end{proof}

\end{document}